\def\BibTeX{{\rm B\kern-.05em{\sc i\kern-.025em b}\kern-.08em
    T\kern-.1667em\lower.7ex\hbox{E}\kern-.125emX}}
\newcommand{\mathletter}[1]{
	\expandafter\newcommand\csname b#1\endcsname{\mathbb #1}
	\expandafter\newcommand\csname c#1\endcsname{\mathcal #1}
	\expandafter\newcommand\csname f#1\endcsname{\mathfrak #1}
	\expandafter\newcommand\csname til#1\endcsname{\widetilde #1}
	\expandafter\newcommand\csname ha#1\endcsname{\widehat #1}
	\expandafter\newcommand\csname bf#1\endcsname{\bf #1}
	\expandafter\newcommand\csname s#1\endcsname{\mathsf #1}
}
\def\mathletters#1{\mathlettersB #1,,}
\def\mathlettersB#1,{\ifx,#1,\else\mathletter #1\expandafter\mathlettersB\fi}
\newcommand{\mathletterl}[1]{
	\expandafter\providecommand\csname v#1\endcsname{\vec{#1}}
}
\def\mathlettersl#1{\mathlettersC #1,,}
\def\mathlettersC#1,{\ifx,#1,\else\mathletterl #1\expandafter\mathlettersC\fi}
\def\bea{\begin{equation}\begin{aligned} }
\def\ena{\end{aligned}\end{equation} }
\def\bee{\begin{equation}}
\def\ene{\end{equation}}
\renewcommand{\vec}[1]{\mathbf{#1}}
\def\T{\mathsf{T}}
\newtheorem{assum}{Assumption}
\newtheorem{theorem}{Theorem}
\newtheorem{lemma}{Lemma}
\newcommand{\edit}[1]{{\color{black}#1}}
\def\@fnsymbol#1{\ensuremath{\ifcase#1\or  \natural \or \dagger\or * \or \ddagger\or
   \mathsection\or \mathparagraph\or \|\or **\or \dagger\dagger
   \or \ddagger\ddagger \else\@ctrerr\fi}}
\begin{document}
\title{Asynchronous Parallel Policy Gradient Methods for the Linear Quadratic Regulator\thanks{This work was  supported by National Key R\&D Program of China (2022ZD0116700) and National Natural Science Foundation of China (62033006, 62325305) (Corresponding author: Keyou You).}
		\thanks{The authors are with the Department of Automation and BNRist, Tsinghua University, Beijing 100084, China. E-mail: shaxy18@mails.tsinghua.edu.cn, zfr18@mails.tsinghua.edu.cn, youky@tsinghua.edu.cn.}
		\author{Xingyu Sha, Feiran Zhao, Keyou You, \IEEEmembership{Senior Member,~IEEE}}
}



\maketitle

\begin{abstract}
Learning policies in an asynchronous parallel way is essential to the numerous successes of RL for solving large-scale problems. However, their convergence performance is still not rigorously evaluated.  To this end, we adopt the asynchronous parallel zero-order policy gradient  (AZOPG) method to solve the continuous-time linear quadratic regulation problem. Specifically,  multiple workers independently perform system rollouts to estimate zero-order PGs  which are then aggregated in a master for policy updates. As in the celebrated A3C algorithm, each worker is allowed to interact with the master {\em asynchronously}. By quantifying the convergence rate of the AZOPG, we  show its linear speedup property, both in theory and simulation, which reveals the advantages of using asynchronous parallel workers in learning policies. 
\end{abstract}

\begin{IEEEkeywords}
Linear system, linear quadratic regulator, policy gradient, asynchronous parallel methods
\end{IEEEkeywords}

\section{Introduction}

Reinforcement learning (RL) has been regarded as an important approach for solving control problems in many domains, e.g., robotics \cite{nguyen2019review}, drones \cite{azar2021drone}, power grid system \cite{zhang2020deep} and autonomous driving \cite{kiran2022deep}. Among its numerous  successes, policy gradient (PG) methods \cite{sutton1999policy}, which directly parameterize control policy and iteratively search an optimal one, are essential. In sharp contrast to the model-based methods, the PG methods can be directly implemented only based on system rollouts. Moreover, their convergence performance has been quantified by solving classical control tasks, such as the linear quadratic regulator (LQR)  and its variants performance\cite{fazel2018global,mohammadi2022convergence,mohammadi2020linear,malik2020derivative, li2022distributed, zhang2019policy, zhao2022sample,zhao2023global,hu2023toward}, where both the global convergence rate and the sample complexities are established.  

However, those end-to-end PG methods usually require a massive number of system rollouts to learn a reliable policy, which is computational demanding, and thus {\em parallel} PG methods with multiple workers have been proposed to accelerate learning. For example, the celebrated A3C algorithm adopts an asynchronous parallel framework \cite{mnih2016asynchronous}. Though it has achieved extensively empirical successes, an explicit speedup rate remains unclear \cite{shen2023towards}. 
To this end, we evaluate the asynchronous parallel zero-order PG (AZOPG) method by solving the continuous-time LQR problem as well. As in the A3C algorithm, multiple parallel workers simultaneously perform system rollouts in an asynchronous way to estimate PGs which are then sent to a master for policy updates. Under this setting, we  show the {\em linear speedup} rate of the  AZOPG with respect to the  number of workers, both in theory and simulation, which rigorously confirms the advantages of using multiple workers to accelerate PG methods. In particular, the number of system rollouts for each worker is proportional to $\log(1/\epsilon)/M$ to achieve an $\epsilon$-accuracy where $M$ denotes the number of workers. 



Though \cite{fazel2018global, mohammadi2022convergence, mohammadi2020linear} have established the global convergence results for zero-order PG methods, it is unclear how to extend  to our  setting  where we are required to solve a nonconvex optimization problem over a nonconvex constraint set in an asynchronous parallel way. This implies that the AZOPG has to use stale zero-order PG estimates, and renders its convergence analysis much more challenging and involved than that of \cite{mohammadi2022convergence, mohammadi2020linear,fazel2018global}. In this work, we resort to the concentration theory to quantify the accuracy of PG estimates and asynchronicity effects. 
%


Our work is also related to the federated PG methods of \cite{ren2020lqr, wang2023model} for solving the LQR problem. However, their methods require synchronizations among workers per policy update, meaning that the faster workers have to wait for the slow ones and cannot fully exploit the advantages of the parallel setting. Usually, asynchronicity is one of the key challenges to be attacked in the parallel algorithm \cite{zhang2019asyspa}. In fact, the AZOPG is closely related to the asynchronous parallel stochastic optimization methods \cite{bertsekas1989parallel, agarwal2011distributed, recht2011hogwild,lian2015asynchronous}. Unfortunately, none of them can be directly adopted due to the nonconvexity of the PG method for the LQR problem.

The rest of the paper is structured as follows. In Section \ref{sec:2}, we introduce the parameterized LQR problem and describe our asynchronous parallel setting. In Section \ref{sec:3}, we propose the AZOPG method and establish its speedup convergence. The proof of convergence is given in Section \ref{sec:4}. In Section \ref{sec:5}, we numerically illustrate the linear speedup property of the AZOPG, and draw some concluding marks in Section \ref{sec:6}. Technical details can be found in appendices.

\textit{Notations:} Denote $\|\cdot\|_\sF$ as the matrix Frobenius norm. Denote ${\rm Tr}(A)$ as the trace of matrix $A$, and $\langle A,B\rangle={\rm Tr}(A^\T B)$ as the matrix inner product. Let ${\rm vec}(A)\in \bR^{mn}$ denote the vectorized form of matrix $A\in\bR^{m\times n}$. Denote $\sigma_{\min}(A)$ as the smallest singular value of a positive-definite symmetric matrix $A$. Let $\cS^{n-1}\subset\bR^{n-1}$ be the unit sphere of dimension $n-1$. Let $\lfloor x\rfloor$ denote the largest integer no larger than $x$. 

\section{Problem Formulation}\label{sec:2}

This section introduces the parameterized continuous-time LQR problem and some PG methods with the focus on our asynchronous parallel setting.

\subsection{The parameterized continuous-time LQR}
We consider the following infinite-horizon LQR for a continuous-time linear system \cite{anderson2007optimal}
\bea\label{equ:lqr-form}
&\mbox{minimize}_{u}& &\bE_{\zeta\sim\cD} \left[\int_{0}^{\infty} \left(x(t)^{\T}Qx(t) + u(t)^{\T}Ru(t) \right)dt \right]\\
&\mbox{subject to}& &\dot{x}(t) = Ax(t) + Bu(t), x(0)=\zeta,
\ena
where $x(t)\in \bR^n$ is the state vector starting with an initial state vector $x(0)=\zeta$ that is sampled from the distribution $\cD$, $u(t)\in \bR^m$ is the control input vector, and $Q\in \bR^{n\times n}$ and $R\in \bR^{m\times m}$ are two positive definite matrices. 

In the sequel, we always assume that $(A,B)$ is controllable. Then, the optimal policy for \eqref{equ:lqr-form} is in the form of linear state feedback \cite{anderson2007optimal}, i.e., $u(t)=-\sK^* x(t), \sK^*\in \bR^{m\times n}$. This implies that there is no loss of optimality to focus on its policy $K$ parameterized form 
\bea\label{equ:policy-optimization-form}
\mbox{minimize}_{\sK\in\cK_{\rm st}}\ f(\sK),\\
\ena
where $f(\sK)={\rm Tr}(P(\sK)X_0)=\bE[f_\zeta(\sK)]$, $f_\zeta(\sK)={\rm Tr}(P(\sK)\zeta\zeta^\T)$ and $\cK_{\rm st}$ denotes the set of stabilizing policies, i.e., $\cK_{\rm st}=\{\sK\in \bR^{m\times n}| A-B\sK \mbox{ is Hurwitz}\}$. Moreover, $X_0 = \bE \left[\zeta\zeta^\T\right]$ and $P(\sK)$ is the unique solution to the Lyapunov equation 
\edit{
$$(A-B\sK)^\T P(\sK) + P(\sK)(A-B\sK) + \sK^\T R\sK + Q=0.$$}
\subsection{PG methods and our asynchronous parallel setting}

\begin{figure}[!t]
	\centering
	\includegraphics[width=0.8\linewidth]{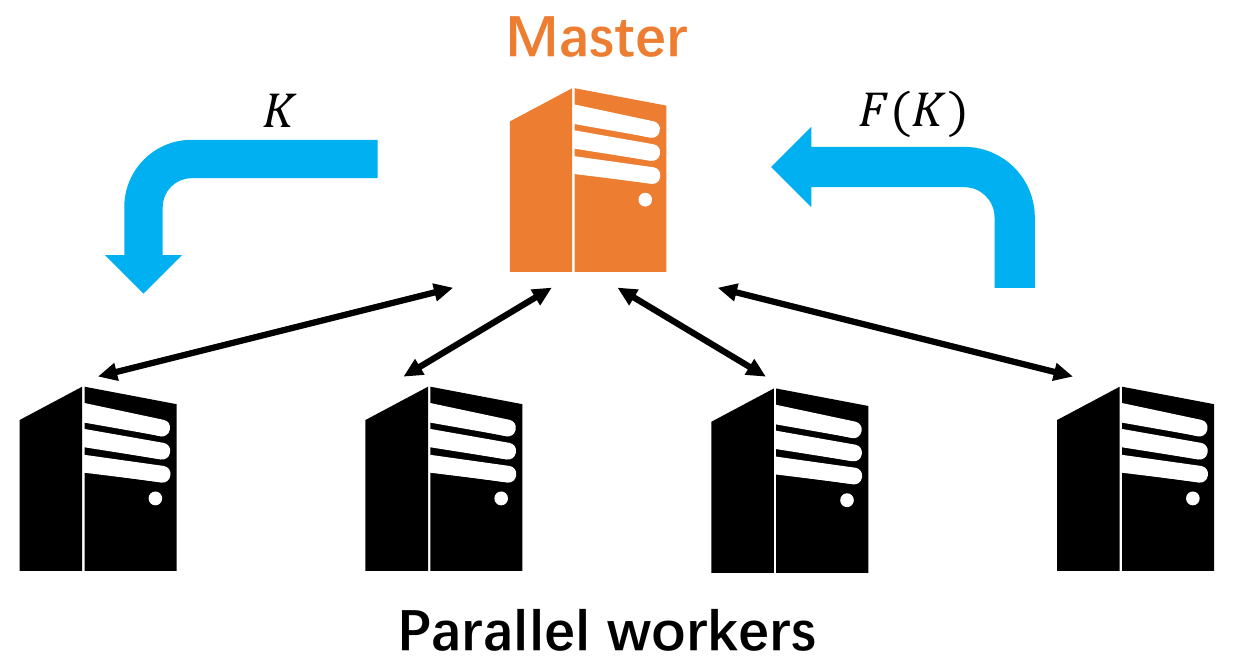}
	\caption{The parallel PG method:  every worker pulls the latest policy $K$ from the master and then computes a PG, which is then pushed back  to the master.}
	\label{star}
\end{figure}

Following \cite{mohammadi2022convergence}, the policy gradient (PG) method aims to  solve \eqref{equ:policy-optimization-form} in an iterative form
\bea\label{modelbasedpg}
\sK_{j+1}=\sK_j - \eta \nabla f(\sK_j), \sK_0\in\cK_{\rm st}, 
\ena
where $\eta$ is a positive stepsize, the PG is given by
\bea\label{equ:policy-gradient}
\nabla f(\sK) = E(\sK)X(\sK),
\ena
and  $E(\sK)=2(R\sK -B^\T P(\sK))$, $X(\sK)$ is the unique solution to \edit{$(A-B\sK)X(\sK) + X(\sK)(A-B\sK)^\T + X_0=0$}. 

Via the concept of {\em gradient dominance}, Ref. \cite{mohammadi2022convergence} shows the global convergence of \eqref{modelbasedpg} to the optimal policy $\sK^*$.  However, the computation of PG in \eqref{equ:policy-gradient} explicitly relies on the system  matrices $(A, B)$. 
%
%
%
If they are not available, Refs. \cite{mohammadi2022convergence, mohammadi2020linear, fazel2018global, malik2020derivative} adopt the zero-order method to estimate PG using the cost observations from system rollouts, including the two-point zero-order method 
\bea
G(\sK)=\frac{f_\zeta (\sK+r\sU)-f_\zeta(\sK-r\sU)}{2r}\sU, \notag
\ena
where $r>0$ is the smooth radius, $\zeta$ is randomly sampled from the distribution $\cD$, $\mbox{vec}(\sU)$ is uniformly sampled from the sphere $\sqrt{mn}\cdot \cS^{mn-1}$, or  its batch version
\bea \label{batchpg}
G_N(\sK)=\frac{1}{N}\sum_{i=1}^N\frac{f_{\zeta_i} (\sK+r\sU_i)-f_{\zeta_i}(\sK-r\sU_i)}{2r}\sU_i,
\ena
where $N$ denotes the batch size, $\{\zeta_i\}_{i=1}^N$ are independently sampled from $\cD$ and $\{\mbox{vec}(\sU_i)\}_{i=1}^N$ are independently sampled. 

\begin{figure}[!t]
	\centering
	\includegraphics[width=\linewidth]{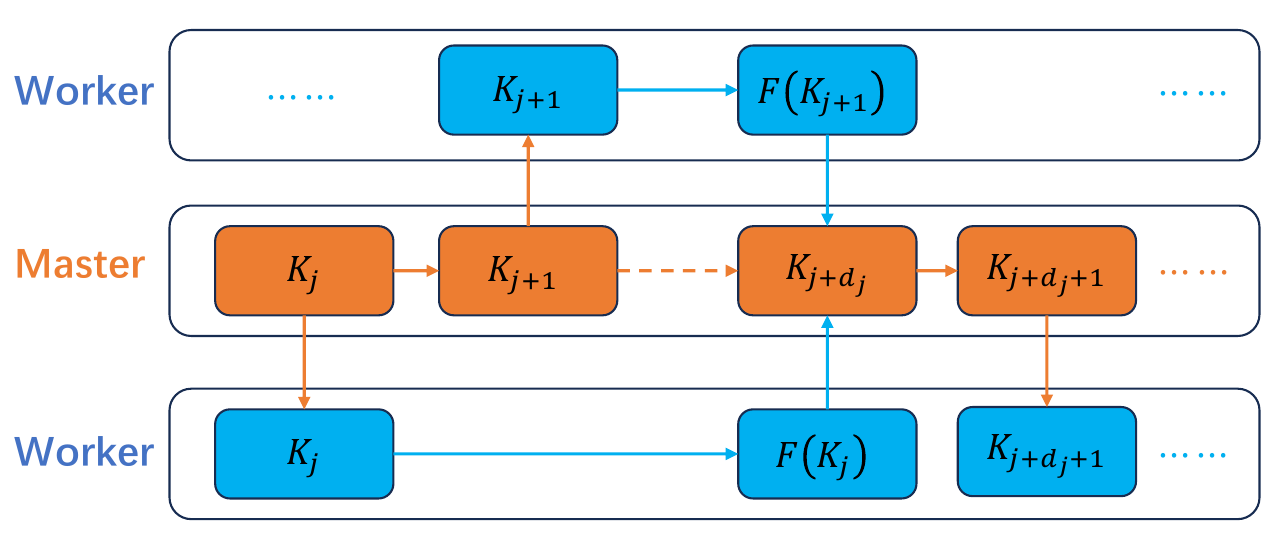}
	\caption{The AZOPG: the master uses stales PG estimates to update policies.}
	\label{asyn-fig}
\end{figure}

In comparison, the batch version reduces the variance of the zero-order PG estimate with a rate $1/N$ and might lead to better convergence performance. 
However, the computational cost of obtaining \eqref{batchpg} is essential as the evaluation of each $f_\zeta(\cdot)$ requires performing a system rollout with a sufficiently large time-horizon $\tau$, i.e., 
\bea\label{truncation-cost}
f_\zeta(\sK) &= \int_{0}^{\infty} \left(x(t)^{\T}Qx(t) + u(t)^{\T}Ru(t) \right)dt \\
& \approx \int_{0}^{\tau} \left(x(t)^{\T}Qx(t) + u(t)^{\T}Ru(t) \right)dt \\
&:=f_\zeta^\tau(\sK)
\ena
where $\dot{x}(t) = Ax(t) + Bu(t), u(t)=-\sK x(t)$ and  $x(0)=\zeta$.  
%
%
%
%

This may be time-consuming in a single computing worker if a relatively large batch size is needed.  
To resolve it, we adopt the master-worker framework of  the well-known A3C \cite{mnih2016asynchronous} in Fig. \ref{star} where multiple workers are employed to simultaneously perform system rollouts to evaluate \eqref{truncation-cost} in a parallel way, hoping to accelerate the computation of  \eqref{batchpg}, and the master aggregates PG estimates from workers to update the policy $\sK$.  It should be noted that such a framework has been widely adopted to accelerate the policy learning process of RL \cite{espeholt2018impala, espeholt2019seed,mnih2016asynchronous} and is not new even for the parameterized LQR problem, see e.g. \cite{wang2023model}.  In sharp contrast to the synchronous updates in \cite{wang2023model}, our iterative method can be {\em asynchronous} in the sense that each worker is not required to wait for others and the master can use stale PGs for policy updates. See Fig. \ref{asyn-fig} for an illustration. That is, such an asynchronous parallel way strictly follows that of the celebrated A3C algorithm \cite{mnih2016asynchronous}. Despite extensive empirical successes of A3C, how to rigorously quantify its convergence advantages largely remains open. The main contribution of this work establishes the linear speedup convergence of the PG method for the LQR problem under the asynchronous parallel setting.



\section{The Asynchronous Parallel Zero-order Policy Gradient Method and Linear Speedup}\label{sec:3}

In this section, we proposes a novel asynchronous parallel zero-order policy gradient (AZOPG) method and establish its linear speedup convergence rate.
\subsection{The AZOPG}
\begin{algorithm}[t!]
\caption{The AZOPG from the view of each worker}
\label{algo-azopg}
\begin{algorithmic}[1]
\REQUIRE distribution $\cD$, smooth radius $r$, time-horizon $\tau$
\STATE Pull the latest policy $\sK$ from the master.
\STATE Uniformly sample a vector ${\rm vec}(\sU)$ from the sphere $\sqrt{mn}\cdot \cS^{mn-1}$ and sample an initial state vector $\zeta$ from $\cD$.
\STATE Set $\sK_1=\sK+r\sU$ and $\sK_2=\sK-r\sU$.
\STATE Compute $f_\zeta^\tau(\sK_k)$ in \eqref{truncation-cost} by  performing two system rollouts with perturbed policies $\sK_k, k\in\{1,2\}$, respectively.
\STATE Estimate PG via the two-point method $F_\zeta^\tau(\sK, \sU)=\frac{1}{2r}(f_\zeta^\tau(\sK_1) - f_\zeta^\tau(\sK_2))\sU$ and then push it to the master.
\end{algorithmic}
\end{algorithm}

Each worker of the AZOPG in Algorithm \ref{algo-azopg} locally performs system rollouts for the master to compute PG estimates of batch version in \eqref{batchpg}. Specifically, it first pulls the latest policy $\sK$ from the master, and randomly generates an initial state vector $\zeta$ and a perturbation matrix $\sU$. Then, it performs two system rollouts to collect the cost of \eqref{truncation-cost} using two perturbed policies $\sK\pm r\sU$ to form a two-point PG estimate $F_\zeta^\tau(\sK, \sU)$ which is pushed back to the master. Note that each worker can repeatedly implement Algorithm \ref{algo-azopg} without waiting for others, and the communication delays between a worker and the master is bounded. Meanwhile, the master keeps receiving two-point PG estimates from workers and simply takes an average once it collects $N$ PG estimates. 

Since it takes time for workers to implement Algorithm \ref{algo-azopg}, the master may use stale two-point PG estimates to update the policy. 
This leads to that the PG method is \edit{mathmatically} given as
\bea\label{master-update}
\overline{G}_j &= \frac{1}{N}\sum_{i=1}^N F_{\zeta_{j,i}}^{\tau}(\sK_{j-d_{j,i}},\sU_{j,i}),\\
\sK_{j+1} &= \sK_j - \eta \cdot \overline{G}_j,
\ena
where $\eta$ is a constant stepsize, $(j,i)$ denotes the index of running system rollouts among all  workers, and $d_{j,i}$ denotes the length of delays in the $j$-iteration. Note that in computing \eqref{master-update}, the master may use multiple two-point PG estimates from the same worker, which is inevitable for our asynchronous setting, and the PG update in \eqref{master-update} usually takes much less time than that of computing a two-point PG estimate.

\subsection{Linear speedup of the AZOPG}
In this subsection, we establish convergence results of the AZOPG method under some reasonable assumptions.

\begin{assum}\label{assum1}
	\textbf{(Initial state distribution): }The distribution $\cD$ has i.i.d. zero-mean entries and unit covariance with bounded support, i.e., $\Vert \zeta \Vert\le \delta$ for some constant $\delta>0$.
\end{assum}

The bounded norm is made only to simplify presentation and can be extended to unbounded case, e.g., Gaussian distributions. We shall validate it in Section \ref{sec:6}.

\begin{assum}\label{assum2}
\textbf{(Bounded computation and communication time)}: For each worker, the  duration between two consecutive time instants of running Algorithm \ref{algo-azopg} is within some interval $[\underline{t},\bar{t}]$ where $0<\underline{t}<\bar{t}<\infty$.
\end{assum}

Assumption \ref{assum2} is mild, since the computation and communication finish in finite time and consume time in practice.  Our result is also based on the smoothness and gradient dominance properties of $f(\sK)$ over its sublevel set $\cQ(a)=\{\sK: f(\sK)\le a\}$ \cite{fatkhullin2021optimizing}, as formalized in the following lemma.
\begin{lemma}\label{lemma-global}  Suppose that $\sK,\sK'\in \cQ(a)$, it holds that
\begin{subequations}
	\begin{alignat}{2}
	f(\sK)-f(\sK^*) &\le \frac{1}{2\mu_1(a)} \|\nabla f(\sK)\|_\sF^2, \label{equ-pl}\\
	\|\nabla f(\sK)-\nabla f(\sK')\|_\sF &\le \mu_2(a) \|\sK-\sK'\|_\sF \label{equ-smooth}
	\end{alignat}
\end{subequations}
where the positive $\mu_1, \mu_2$ also depend on   parameters of the LQR problem in \eqref{equ:lqr-form}.
\end{lemma}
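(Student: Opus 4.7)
The plan is to deduce both inequalities from standard structural bounds on the Lyapunov-based maps $P(\sK)$ and $X(\sK)$ restricted to $\cQ(a)$. I would first establish three uniform estimates on the sublevel set: (i) $\|P(\sK)\|\le a/\sigma_{\min}(X_0)$, which follows from $f(\sK)={\rm Tr}(P(\sK)X_0)\le a$ and $X_0\succ 0$; (ii) the structural lower bound $X(\sK)\succeq X_0$, together with an upper bound on $\|X(\sK)\|$ obtained from its own Lyapunov equation and (i); (iii) a uniform bound on $\|\sK\|$ extracted from $R\sK = B^\T P(\sK) + \tfrac{1}{2}E(\sK)$ after noting that $E(\sK)$ is bounded on $\cQ(a)$. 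A consequence is that $\cQ(a)$ is compact in $\cK_{\rm st}$ and the closed-loop matrix $A-B\sK$ has a \emph{uniformly negative} spectral abscissa on $\cQ(a)$.

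For the gradient-dominance inequality \eqref{equ-pl}, I would adopt the Fazel-style cost-difference identity. Writing $\Delta \sK=\sK-\sK^*$, the advantage decomposition for the LQR cost gives
\[
f(\sK)-f(\sK^*) = {\rm Tr}\bigl(X(\sK)\Delta \sK^\T R \Delta \sK\bigr),
\]
and by swapping the roles of $\sK$ and $\sK^*$ and completing the square one arrives at
\[
f(\sK)-f(\sK^*) \le \frac{\|X(\sK^*)\|}{4\sigma_{\min}(R)} \|E(\sK)\|_\sF^2.
\]
Using $\nabla f(\sK)=E(\sK)X(\sK)$ and $X(\sK)\succeq X_0$ one has $\|\nabla f(\sK)\|_\sF\ge \sigma_{\min}(X_0)\|E(\sK)\|_\sF$, and combining these delivers \eqref{equ-pl} with an explicit $\mu_1(a)$ depending only on $\sigma_{\min}(R)$, $\sigma_{\min}(X_0)$, and $\|X(\sK^*)\|$.

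For the smoothness inequality \eqref{equ-smooth}, the strategy is to prove that both $\sK\mapsto P(\sK)$ and $\sK\mapsto X(\sK)$ are Lipschitz on the compact set $\cQ(a)$ and then expand $\nabla f(\sK)=2(R\sK-B^\T P(\sK))X(\sK)$. Differencing the Lyapunov equations defining $P$ and $X$ at $\sK$ and $\sK'$, and inverting the uniformly stable closed-loop Lyapunov operator, yields $\|P(\sK)-P(\sK')\|\le L_P(a)\|\sK-\sK'\|_\sF$ and $\|X(\sK)-X(\sK')\|\le L_X(a)\|\sK-\sK'\|_\sF$. Substituting into the gradient expression and applying the triangle inequality together with the uniform bounds on $\|\sK\|,\|P(\sK)\|,\|X(\sK)\|$ produces a closed-form $\mu_2(a)$.

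The main obstacle is the uniformity of the Lyapunov Lipschitz constants $L_P(a), L_X(a)$. These reduce to bounding the inverse of the operator $Y\mapsto (A-B\sK)^\T Y + Y(A-B\sK)$ in operator norm \emph{uniformly} over $\cQ(a)$, which in turn demands a sublevel-set-uniform exponential decay rate for $e^{(A-B\sK)t}$. Extracting such a uniform rate purely from $f(\sK)\le a$ is the nontrivial technical step; it has been worked out for continuous-time LQR in the references already cited, and I would invoke those estimates to produce explicit closed-form expressions for $\mu_1(a)$ and $\mu_2(a)$.
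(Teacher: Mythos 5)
The paper never actually proves Lemma~\ref{lemma-global}: it is imported wholesale from the cited literature (\cite{fatkhullin2021optimizing}, with the sublevel-set estimates worked out in \cite{mohammadi2022convergence}), and your outline follows that same standard route (cost-difference identity for gradient dominance, Lipschitzness of the Lyapunov maps for smoothness). However, your sketch contains a genuine error in the step that delivers the explicit $\mu_1(a)$: the claim $X(\sK)\succeq X_0$ is false for continuous-time systems. It is a discrete-time fact (there the series $\sum_{k\ge 0}(A-B\sK)^k X_0((A-B\sK)^\T)^k$ contains the $k=0$ term $X_0$); here $X(\sK)=\int_0^\infty e^{(A-B\sK)t}X_0\,e^{(A-B\sK)^\T t}\,dt$, and the scalar example $A-B\sK=-c$ with $c$ large gives $X(\sK)=X_0/(2c)\prec X_0$. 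Consequently the inequality $\|\nabla f(\sK)\|_\sF\ge\sigma_{\min}(X_0)\|E(\sK)\|_\sF$ does not follow as stated. What you need instead is a uniform positive lower bound on $\sigma_{\min}(X(\sK))$ over $\cQ(a)$, e.g. $v^\T X(\sK)v\ \ge\ \sigma_{\min}(X_0)\int_0^\infty e^{-2\|A-B\sK\|t}\,dt\ =\ \sigma_{\min}(X_0)/(2\|A-B\sK\|)$ for unit $v$, which in turn requires the uniform bound on $\|\sK\|$ over $\cQ(a)$; with that substitution the gradient-dominance argument goes through with a modified $\mu_1(a)$.

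Your route to the bound on $\|\sK\|$ is also circular as written: you propose to extract it from $R\sK=B^\T P(\sK)+\tfrac12 E(\sK)$ ``after noting that $E(\sK)$ is bounded on $\cQ(a)$,'' but $E(\sK)=2(R\sK-B^\T P(\sK))$ cannot be bounded on $\cQ(a)$ before $\|\sK\|$ is. The standard non-circular argument uses only ${\rm Tr}(P(\sK)X_0)\le a$ (hence $\|P(\sK)\|\le a/\sigma_{\min}(X_0)$) together with the Lyapunov equation: $\sK^\T R\sK=-Q-(A-B\sK)^\T P(\sK)-P(\sK)(A-B\sK)$ gives $\sigma_{\min}(R)\|\sK\|^2\le\|Q\|+2\|P(\sK)\|\left(\|A\|+\|B\|\,\|\sK\|\right)$, a quadratic inequality in $\|\sK\|$ that yields an explicit bound. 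The remaining hard step you identify correctly --- a sublevel-set-uniform decay rate for $e^{(A-B\sK)t}$, hence uniform invertibility bounds for the closed-loop Lyapunov operator giving $L_P(a),L_X(a)$ --- is deferred to the references, which is consistent with how the paper itself treats the lemma; but the two items above must be repaired before the constants $\mu_1(a)$ and $\mu_2(a)$ can legitimately be called explicit.
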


Now, we are in the position to state our main result.

\begin{theorem}\label{theo:main}
Suppose that Assumptions \ref{assum1} and \ref{assum2} hold. Given a desired accuracy $\epsilon>0$ and an initial policy $\sK_0\in \cQ(a)$, let the time-horizon $\tau$, the smooth radius $r$, the batch size $N$, the number of workers $M$ and the stepsize $\eta$ satisfy that
\edit{
\bea\label{parameter-conditions}
r &< \min\{r_0(a), \theta_1(a)\sqrt{\epsilon}\},
&&\ \,\tau\ge \theta_2(a) \log\frac{1}{\epsilon r},\\
N&\ge C_1\beta^4\theta_3(a)\log^6(nm) nm,
&&1\le M \le C_0 N,\\
\eta &\le \frac{1}{128\mu_2(a)(C_g^2(a)+TC_g(a))}.
\ena}
Then, the AZOPG achieves $f(\sK^j)-f(\sK^*)\le \epsilon$ in at most
\bea
j\le ({8}/{\eta})\log\left((f(\sK^0)-f(\sK^*))/{\epsilon}\right) \notag
\ena
iterations with probability not smaller than 
\bea\label{failure}
1-j(C_2N^{-\beta}+C_3e^{-N}). \notag
\ena 
Here, $\beta, C_0, C_1, C_2, C_3$ are positive constants, $\mu_1(a)$ and $\mu_2(a)$ are the gradient dominance and smoothness parameters of the function $f$ over the sublevel set $\cQ(a)$, $C_g, r_0, \theta_1, \theta_2, \theta_3$ are functions that depend on the parameters of the LQR problem, $T=C_0\lfloor \bar{t}/\underline{t} \rfloor+1$.
\end{theorem}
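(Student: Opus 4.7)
The plan is to start from a smoothness-based descent inequality for $f$ along the iterates and bound the gradient estimation error induced by (i) the finite-horizon truncation, (ii) the smoothing radius, (iii) the finite batch size, and (iv) the asynchronicity delay. Concretely, as long as both $K_j$ and $K_{j+1}$ remain in the sublevel set $\cQ(a)$, Lemma~\ref{lemma-global} gives
\[
f(K_{j+1})\le f(K_j)-\eta\langle\nabla f(K_j),\overline G_j\rangle+\tfrac{\mu_2(a)\eta^2}{2}\|\overline G_j\|_\sF^2.
\]
Writing $\overline G_j=\nabla f(K_j)+\mathcal E_j$, the analysis reduces to controlling $\mathcal E_j$ in Frobenius norm and then combining with the PL-type inequality \eqref{equ-pl} to obtain a geometric recursion on $f(K_j)-f(K^*)$.

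First I would decompose $\mathcal E_j$ into four parts: a smoothing bias, a truncation bias, a zero-mean stochastic part from the random $(\zeta_i,U_i)$, and a staleness term $\nabla f(K_{j-d_{j,i}})-\nabla f(K_j)$. The first two biases are deterministic and can be driven to $O(\sqrt{\epsilon})$ by the stated choices of $r$ and $\tau$ using the Lyapunov-continuity estimates that underlie Lemma~\ref{lemma-global} (these mirror the single-worker analyses of \cite{mohammadi2022convergence,mohammadi2020linear}). The stochastic term is controlled by a concentration argument applied to the $N$ i.i.d. two-point evaluations: after a mild truncation of the rare events $\|\zeta\|>\delta$ and of atypical cost values, a matrix Bernstein / sub-exponential tail bound yields $\|(\text{stochastic part})\|_\sF\lesssim C_g(a)\sqrt{nm\log(nm)/N}$ except on a failure set of probability at most $C_2N^{-\beta}+C_3 e^{-N}$. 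For the staleness term, Assumption~\ref{assum2} together with $M\le C_0 N$ forces every delay $d_{j,i}$ to be bounded by $T=C_0\lfloor\bar t/\underline t\rfloor+1$, after which the smoothness part of Lemma~\ref{lemma-global} gives
\[
\|\nabla f(K_{j-d_{j,i}})-\nabla f(K_j)\|_\sF\le \mu_2(a)\,\eta\sum_{\ell=j-T}^{j-1}\|\overline G_\ell\|_\sF.
\]

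Plugging the four bounds back into the descent inequality and using $\|\overline G_j\|_\sF\le\|\nabla f(K_j)\|_\sF+\|\mathcal E_j\|_\sF$, I expect a recursion of the form
\[
f(K_{j+1})-f(K^*)\le\bigl(1-\tfrac{\eta\mu_1(a)}{4}\bigr)(f(K_j)-f(K^*))+\eta\!\!\sum_{\ell=j-T}^{j-1}\!\!\alpha_\ell\|\overline G_\ell\|_\sF^2+\eta\,\epsilon,
\]
where the coefficients $\alpha_\ell$ are absorbed by the stepsize bound $\eta\le 1/(128\mu_2(a)(C_g^2+TC_g))$: this is precisely where the factor $TC_g$ enters, since it compensates for the accumulation over a delay window of length $T$. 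Unrolling the recursion over windows of length $T$ yields the linear rate $(1-\eta\mu_1(a)/8)$ per step, giving the iteration count $j\le(8/\eta)\log((f(K_0)-f(K^*))/\epsilon)$.

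The principal obstacle is the \emph{bootstrap}: Lemma~\ref{lemma-global} only applies while every $K_\ell$ (including the stale ones referenced at iteration $j$) lies in $\cQ(a)$, yet the staleness error already needs the smoothness constant of $\cQ(a)$ to be controlled. I would resolve this by an induction on $j$ that simultaneously (a) carries a high-probability event $\Omega_j$ on which the stochastic error has been small at every preceding iteration, (b) shows that on $\Omega_j$ the cost decreases monotonically (up to $\epsilon$) so that $K_{j+1}\in\cQ(a)$, and (c) maintains the gradient norm bound $\|\overline G_\ell\|_\sF\le C_g(a)$ used above. A union bound over the $j$ iterations then produces the failure probability $j(C_2N^{-\beta}+C_3e^{-N})$, and the linear speedup in $M$ appears because each worker's per-iteration rollout budget scales like $N/M$, while $N$ itself only needs to grow polylogarithmically with the problem dimension.
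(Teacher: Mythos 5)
Your overall architecture (descent inequality, error decomposition into smoothing bias, truncation bias, stochastic noise and staleness, bounded delay $d_{j,i}\le T$ from Assumption~\ref{assum2} with $M\le C_0N$, induction to keep iterates in $\cQ(a)$, and a union bound over iterations) follows the same skeleton as the paper. However, there is a genuine gap in how you quantify the stochastic and staleness errors. You bound the stochastic part \emph{additively}, as $\|(\text{stochastic part})\|_\sF\lesssim C_g(a)\sqrt{nm\log(nm)/N}$, and likewise carry an absolute bound $\|\overline G_\ell\|_\sF\le C_g(a)$ in the staleness estimate. With $N=\widetilde{\cO}(nm)$ independent of $\epsilon$ and a constant stepsize, such additive error terms do not vanish as the iterates approach the optimum: in the descent inequality the cross term becomes $-\eta\|\nabla f(\sK_j)\|_\sF^2+\eta c_0\|\nabla f(\sK_j)\|_\sF$ with $c_0$ a constant, so the recursion stalls at a noise floor of order $c_0^2/\mu_1(a)$ rather than reaching an arbitrary accuracy $\epsilon$; your "$+\eta\,\epsilon$" term in the recursion implicitly assumes all error contributions are at scale $\sqrt{\epsilon}$, which your own bounds do not deliver unless $N$ grows like $1/\epsilon$ (contradicting the theorem) or the stepsize is taken $\epsilon$-dependent.

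The paper avoids this by making the concentration bounds \emph{multiplicative} in the gradient norms: the projections of the error terms onto $\nabla f(\sK_j)$ are bounded by constant fractions of $\|\nabla f(\sK_j)\|_\sF\max_{0\le v\le d_j}\|\nabla f(\sK_{j-v})\|_\sF$ (inequalities \eqref{omega-1}--\eqref{omega-3}), and $\|\widehat G_j\|_\sF\le C_g(a)\max_{0\le v\le d_j}\|\nabla f(\sK_{j-v})\|_\sF$ in \eqref{omega-4}, which is possible because the two-point estimate's fluctuation scales with the (stale) gradient norm. A second ingredient you are missing is the consecutive-gradient-ratio lemma, $\|\nabla f(\sK_{v-1})\|_\sF^2\le\rho\|\nabla f(\sK_v)\|_\sF^2$ with $\rho=1+1/T$ (so $\rho^{T/2}\le\sqrt e$), proved by its own induction; it is what converts the stale gradient norms appearing in \eqref{omega-1}--\eqref{omega-4} and in the staleness sum into a constant multiple of the current $\|\nabla f(\sK_j)\|_\sF$, so that $-\overline G_j$ is a descent direction with $\langle\nabla f(\sK_j),\overline G_j\rangle\ge\frac1{16}\|\nabla f(\sK_j)\|_\sF^2$ and $\|\overline G_j\|_\sF\le 4C_g(a)\|\nabla f(\sK_j)\|_\sF$. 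In the paper only the $r$- and $\tau$-induced biases are tied to $\epsilon$ (Lemma~\ref{lemma4}), while the stochastic and asynchronicity errors are absorbed as constant relative fractions; without this multiplicative structure and the ratio lemma, your argument cannot produce the stated $\epsilon$-independent batch size and stepsize, nor the geometric rate down to arbitrary $\epsilon$.
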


To reach an $\epsilon$-accruate policy, the overall number of system rollouts  is $\cO(\log(1/\epsilon))$, which is independent of $M$, and matches the sample complexity of the centralized zero-order PG in \cite{mohammadi2022convergence}. Thus, the average rollout complexity of each worker in the AZOPG is $\cO(\log(1/\epsilon)/M)$, which verifies the linear speedup property \cite{lian2016comprehensive}.

For the centralized model-free LQR problem, authors of \cite{mohammadi2022convergence} have established $N=\widetilde{\cO}(n)$  for their batch zero-order policy updates. However, their theoretical failure probability is not  negligible under certain conditions (see \cite[Remark 5]{mohammadi2022convergence}). To address it, we choose a relatively conservative batch size $N=\widetilde{\cO}(nm)$.

Though the global convergence of zero-order PG methods has been established in \cite{mohammadi2022convergence, fazel2018global,mohammadi2020linear, malik2020derivative}, there are new challenges under asynchronous parallel updates as our AZOPG has to use stale PG estimates. To resolve it, we quantify the accuracy of the asynchronous batch PG  estimates by concentration analysis, and show that the policies $\{\sK_v\}_{v=1}^j$ remain stabilizing during the asynchronous learning process. 

\section{Proof of Theorem \ref{theo:main}}\label{sec:4}
We prove Theorem 1 in this section. First, we provide some preliminary lemmas. Second, we bound the ratio between the norms of two consecutive PGs used in former updates. Then, we split the inner-product of $\langle\overline{G}_j, \nabla f(\sK_j)\rangle $ into four parts and prove that $-\overline{G}_j$ is a descent direction of $f(K_j)$. Finally, we characterize the effect of one-step asynchronous policy update and complete the proof by induction.


\subsection{Preliminary Lemmas}
First, we use Lemma \ref{lemma-time} to show that the delays $d_{i,j}$ in \eqref{master-update} are upper bounded, and the bound depends on the communication and computation time in Assumption \ref{assum2}.
\begin{lemma}\label{lemma-time}
Under Assumption \ref{assum2}, let $T=C_0\lfloor{\bar{t}}/{\underline{t}} \rfloor+1$. If $M\le C_0N$, then we obtain that $d_{i,j}\le T$ for all $i,j$.
\end{lemma}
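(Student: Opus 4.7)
The plan is a direct real-time counting argument. Let $T_j$ denote the wall-clock time at which the master produces $\sK_j$, so the $N$ estimates used to form $\sK_{j+1}$ are exactly those that arrive at the master during the window $(T_j, T_{j+1}]$. For the $i$-th such estimate, let $s_{\text{pull}}$ and $s_{\text{push}}$ be the times at which the responsible worker pulls a policy and pushes back an estimate; by Assumption \ref{assum2}, one full cycle of Algorithm \ref{algo-azopg} takes at most $\bar{t}$, so $s_{\text{push}} - s_{\text{pull}}\le \bar{t}$, and the pushes of any fixed worker are separated by at least $\underline{t}$.

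The core step is to localize a short real-time window that contains many master updates. Let $d = d_{j,i}$. Since the worker pulled $\sK_{j-d}$ at time $s_{\text{pull}}$, it must be that $s_{\text{pull}} < T_{j-d+1}$ (otherwise the worker would have seen a fresher policy); since the resulting estimate belongs to the batch forming $\sK_{j+1}$, we have $s_{\text{push}} > T_j$. Hence the half-open window $(s_{\text{pull}},s_{\text{push}}]$ has length strictly less than $\bar{t}$ and strictly contains the $d$ update times $T_{j-d+1},\dots,T_j$. Now I would double-count the estimate arrivals in this window: each of those $d$ updates consumes $N$ estimates, so the master receives at least $dN$ estimates in the window; on the other hand, each of the $M$ workers can produce at most roughly $\lfloor \bar{t}/\underline{t}\rfloor$ estimates in a window of length less than $\bar{t}$, for a total of at most $M\lfloor \bar{t}/\underline{t}\rfloor$. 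Combining with $M \le C_0 N$ gives $dN \le C_0 N\lfloor\bar{t}/\underline{t}\rfloor$, so $d \le C_0\lfloor\bar{t}/\underline{t}\rfloor \le T$.

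The only nontrivial issue is the off-by-one in counting the estimates from a single worker that fall in a half-open window of length at most $\bar{t}$. A worker's push times satisfy $s_{\text{push},k+1}-s_{\text{push},k}\ge \underline{t}$, and one has to argue carefully that at most $\lfloor \bar{t}/\underline{t}\rfloor$ (rather than $\lfloor \bar{t}/\underline{t}\rfloor+1$) of them land in the window under consideration. This is what forces the $+1$ in the definition $T=C_0\lfloor \bar{t}/\underline{t}\rfloor+1$ and gives us the headroom needed to absorb the boundary case when the window is exactly saturated. Apart from this combinatorial detail, no stochastic or optimization-theoretic ingredients are needed: the lemma is purely a scheduling statement, which is why it can be proved in isolation before the rest of Theorem \ref{theo:main}.
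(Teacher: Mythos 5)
Your route is the same scheduling argument the paper itself uses: fix the delayed estimate, consider the responsible worker's pull-to-push window (of length at most $\bar{t}$ by Assumption \ref{assum2}), count how many estimates all workers can push to the master inside that window, and divide by the batch size $N$ to bound the number of master updates, hence the delay. So there is no methodological difference; the comparison reduces to the bookkeeping.

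One counting step does not hold as stated. From the fact that $(s_{\text{pull}},s_{\text{push}}]$ contains the update times $T_{j-d+1},\dots,T_j$ you infer that at least $dN$ estimates arrive \emph{inside} the window. For the earliest of those updates this is unjustified: the batch consumed at $T_{j-d+1}$ consists of estimates arriving in $(T_{j-d},T_{j-d+1}]$, and $T_{j-d}$ may precede $s_{\text{pull}}$, so the master can be almost fully ``pre-loaded'' for that update before your window opens. The window only guarantees on the order of $(d-1)N+1$ arrivals; equivalently, the number of updates completed in the window is at most (arrivals in window)$/N$ \emph{plus one}. This partial-batch boundary effect is precisely what the paper's proof absorbs with its explicit ``$+1$'' (``the master can update at most $\lfloor \bar{t}(M-1)/(\underline{t}N)\rfloor + 1$ iterations within the interval''), and it is a second, distinct source of the ``$+1$'' in $T=C_0\lfloor \bar{t}/\underline{t}\rfloor+1$, in addition to the per-worker off-by-one you do flag (a single worker can push $\lfloor\bar{t}/\underline{t}\rfloor+1$ times, not $\lfloor\bar{t}/\underline{t}\rfloor$, in a window of length $\bar{t}$). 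With the corrected count one gets $(d-1)N \le M\left(\lfloor\bar{t}/\underline{t}\rfloor+1\right)$ and, using $M\le C_0N$, $d\le C_0\lfloor\bar{t}/\underline{t}\rfloor+C_0+1$; like the paper's own derivation, this matches the stated bound only up to the unspecified constant $C_0$, so the lemma survives, but the sentence ``the master receives at least $dN$ estimates in the window'' should be replaced by this corrected accounting.
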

\begin{proof}
The time interval between the pull and push operations of any worker is at most $\bar{t}$. During this time interval,  other workers can at most push $\lfloor \frac{\bar{t}(M-1)}{\underline{t}} \rfloor$ messages to the master. Thus, the master can update at most $\lfloor \frac{\bar{t}(M-1)}{\underline{t}N} \rfloor+1$ iterations within the interval, which yields the bound of $d_{i,j}$ for the case $M\le C_0N$.
\end{proof}

Then, we introduce the PG of $f_\zeta(\sK)$, i.e., $\nabla f_\zeta(\sK)=E(\sK)X_\zeta(\sK)$, where $E(\sK)$ is as defined in \eqref{equ:policy-gradient}, and $X_\zeta(\sK)$ is the solution to $(A-B\sK)X + X(A-B\sK)^\T + \zeta\zeta^\T=0$. As mentioned before, the key of our analysis is to prove that $-\overline{G}_j$ is a descent direction of $f(\sK_j)$. Following \cite{mohammadi2022convergence}, we define the unbiased asynchronous gradient estimate $\widehat{G}_j$ as follows,
\bea
\widehat{G}_j = \frac{1}{N}\sum_{i=1}^N \langle \nabla f_{\zeta_{j,i}}(\sK_{j-d_{j,i}}), \sU_{j,i}\rangle  \sU_{j,i}.\notag
\ena
where the expectation of each component in $\widehat{G}_j$ satisfies that $\bE_{\zeta_{j,i}, \sU_{j,i}} \langle \nabla f_{\zeta_{j,i}}(\sK_{j-d_{j,i}}), \sU_{j,i}\rangle  \sU_{j,i}=\nabla f(\sK_{j-d_{j,i}})$. To study the property of $\widehat{G}_j$, we further decompose $\widehat{G}_j$ as:
\bea\label{define-gradient}
\widehat{G}_j &=\widehat{\nabla}_{j,1} + \widehat{\nabla}_{j,2} + \widehat{\nabla}_{j,3}, \\
\widehat{\nabla}_{j,1} &= \frac{1}{N}\sum_{i=1}^N \langle E_{j-d_{j,i}} (\widehat{X}_{j-d_{j,i}} - X_{j-d_{j,i}}), \sU_{j,i}\rangle  \sU_{j,i},\\
\widehat{\nabla}_{j,2} &= \frac{1}{N}\sum_{i=1}^N \langle \nabla f(\sK_{j-d_{j,i}}) - \nabla f(\sK_j), \sU_{j,i}\rangle  \sU_{j,i},\\
\widehat{\nabla}_{j,3} &= \frac{1}{N}\sum_{i=1}^N \langle \nabla f(\sK_{j}), \sU_{j,i}\rangle  \sU_{j,i},\\
\ena
where $E_{j-d_{j,i}}$ and $X_{j-d_{j,i}}$ are the shorthand notations of $E(\sK_{j-d_{j,i}})$ and $X(\sK_{j-d_{j,i}})$, respectively, and $\widehat{X}_{j-d_{j,i}}=X_{\zeta_{j,i}}(\sK_{j-d_{j,i}})$. In \eqref{define-gradient}, $\widehat{\nabla}_{j,1}$ stands for the error caused by random initial state, $\widehat{\nabla}_{j,2}$ is resulted by asynchronous updates, and $\widehat{\nabla}_{j,3}$ is an unbiased estimate for the gradient $\nabla f(\sK_j)$. 

Next, we show that for a sufficiently large $N$, the error terms caused by $\widehat{\nabla}_{j,1}$ and $\widehat{\nabla}_{j,2}$ are bounded when projected to $\nabla f(\sK_j)$, the inner product $\langle \widehat{\nabla}_{j,3}, \nabla f(\sK_j)\rangle$ concentrates around its expectation $\Vert\nabla f(\sK_j) \Vert_\sF^2$, and the norm of $\widehat{G}_j$ is bounded. The proofs are based on concentration analysis and are deferred to Appendix A, B and C.

\begin{lemma}\label{lemma3}
Let $d_j= \max_{1\le i\le N} d_{j,i}$.  Define $\Omega_j$ as the event that the following inequalities all holds:
\begin{align}
\left|\langle \widehat{\nabla}_{j,1}, \nabla f(\sK_j)\rangle \right| &\le 
\frac{1}{8}  \Vert \nabla f(\sK_j) \Vert_\sF \max_{0\le v\le d_j} \Vert \nabla f(\sK_{j-v})\Vert_\sF, \label{omega-1}\\
\left|\langle \widehat{\nabla}_{j,2}, \nabla f(\sK_j)\rangle \right| &\le 
2\Vert \nabla f(\sK_j) \Vert_\sF \notag \\
&\quad \max_{0\le v\le d_j} \Vert \nabla f(\sK_{j-v})- \nabla f(\sK_{j})\Vert_\sF, \label{omega-2} \\
\langle \widehat{\nabla}_{j,3}, \nabla f(\sK_j)\rangle  &\ge 
\frac12 \Vert \nabla f(\sK_j) \Vert_\sF^2, \label{omega-3} \\
\Vert \widehat{G}_{j} \Vert &\le C_g(a) \max_{0\le v\le d_j} \Vert \nabla f(\sK_{j-v})\Vert_\sF. \label{omega-4} 
\end{align}
where $C_g(a)=C_4 \theta'(a) \beta mn\log N+2\sqrt{mn}$, and $\theta'$ is a positive function depend on LQR parameters. 

Then, for any sequence $\{\sK_v\}_{v=0}^j\subseteq \cQ(a)$ and $N\ge C_1\beta^4\theta_3(a)\log^6(nm) nm$, the event $\Omega_j$ happens with probability at least $1-C_2N^{-\beta}-C_3e^{-N}$.
\end{lemma}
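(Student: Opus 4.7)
My strategy is to establish each of the four inequalities in \eqref{omega-1}--\eqref{omega-4} separately on a high-probability event, and then take a union bound. The statement is worded ``for any sequence $\{\sK_v\}_{v=0}^j\subseteq \cQ(a)$,'' so I would treat the policies and delays as fixed (equivalently, condition on the sigma-algebra they generate) and let the randomness lie only in the fresh samples $\{\zeta_{j,i},\sU_{j,i}\}_{i=1}^N$. Because $\sK_v \in \cQ(a)$, the Lyapunov-equation estimates of \cite{fatkhullin2021optimizing, mohammadi2022convergence} give uniform bounds $\|E(\sK_v)\|_\sF \le c_1(a)$, $\|X(\sK_v)\| \le c_2(a)$, and $\sigma_{\min}(X(\sK_v)) \ge \sigma_{\min}(X_0) > 0$. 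Combined with $\|\zeta_{j,i}\|\le \delta$ (Assumption \ref{assum1}) and $\|\sU_{j,i}\|_\sF=\sqrt{mn}$, these yield the deterministic per-summand bounds that feed every subsequent concentration step.

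\textbf{The three components.} For \eqref{omega-3}, the $i$-th summand of $\langle \widehat{\nabla}_{j,3},\nabla f(\sK_j)\rangle$ is i.i.d.\ with mean $\|\nabla f(\sK_j)\|_\sF^2$, variance $O(\|\nabla f(\sK_j)\|_\sF^4)$, and a deterministic bound $mn\|\nabla f(\sK_j)\|_\sF^2$ coming from $\|\sU_{j,i}\|_\sF=\sqrt{mn}$; a scalar Bernstein inequality then yields the factor-$\tfrac12$ lower bound with failure probability $C_3 e^{-N}$ once $N\gtrsim mn$. For \eqref{omega-2}, I would partition the $N$ summands by the delay value $v\in\{0,\ldots,T\}$, with $T$ from Lemma \ref{lemma-time}, apply a vector concentration bound inside each group to $\langle \nabla f(\sK_{j-v})-\nabla f(\sK_j),\sU_{j,i}\rangle\sU_{j,i}$ (whose conditional expectation is $\nabla f(\sK_{j-v})-\nabla f(\sK_j)$), and union-bound over at most $T+1$ groups; the factor $2$ on the right-hand side is what absorbs the sum of the per-group means plus the concentration slack. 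For \eqref{omega-4}, I would apply the triangle inequality $\|\widehat{G}_j\|\le \sum_{k=1}^3\|\widehat{\nabla}_{j,k}\|$, bound $\|\widehat{\nabla}_{j,3}\|\le 2\sqrt{mn}\max_v\|\nabla f(\sK_{j-v})\|_\sF$ directly, and control $\|\widehat{\nabla}_{j,1}\|$ and $\|\widehat{\nabla}_{j,2}\|$ with matrix-Bernstein-style bounds whose tail is tuned to $N^{-\beta}$; the resulting $C_4\theta'(a)\beta mn\log N$ prefactor is exactly the logarithmic price paid for a polynomial, rather than exponential, failure probability.

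\textbf{The initial-state term and the main obstacle.} The hardest part is \eqref{omega-1}. The $i$-th summand depends on $\widehat{X}_{j-d_{j,i}}-X_{j-d_{j,i}}$, which solves a Lyapunov equation with centered right-hand side $\zeta_{j,i}\zeta_{j,i}^\T-X_0$ and is then coupled multiplicatively with $\sU_{j,i}$. Using the explicit solution map $X_\zeta(\sK)=\int_0^\infty e^{(A-B\sK)t}\zeta\zeta^\T e^{(A-B\sK)^\T t}dt$, each summand is a centered bounded random variable whose variance is of order $c(a)^2 mn\|\nabla f(\sK_{j-d_{j,i}})\|_\sF^2/N$ after multiplication by $E_{j-d_{j,i}}$ and projection onto $\nabla f(\sK_j)$; a matrix Bernstein or Hanson--Wright-type bound then gives the $\tfrac18$ constant once $N\gtrsim\beta^4\log^6(nm)\cdot nm$, with failure probability $C_2 N^{-\beta}$. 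A union bound over the four events then gives the claimed $1-C_2N^{-\beta}-C_3 e^{-N}$. The principal difficulty I anticipate is squeezing a polynomial failure probability $N^{-\beta}$ out of \eqref{omega-1}: an exponential tail would be easier but would force a batch size that scales with $\log(1/\epsilon)/M$ poorly and therefore destroy the linear speedup; the $\log^6(nm)$ factor in the condition on $N$ is, I expect, exactly what this Bernstein-vs-polynomial-tail trade-off costs.
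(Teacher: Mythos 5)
Your overall architecture --- fix the sequence $\{\sK_v\}_{v=0}^j$, establish each of \eqref{omega-1}--\eqref{omega-4} by concentration over the fresh samples $\{\zeta_{j,i},\sU_{j,i}\}$, then union bound --- is the same as the paper's, but the machinery differs. For \eqref{omega-2}--\eqref{omega-3} the paper never partitions by delay value: it projects onto $\nabla f(\sK_j)$ first, so each summand is a product of two linear forms in $\sU_{j,i}$ (hence sub-exponential), and a single Bernstein-type bound for independent, non-identically distributed summands (Lemma~\ref{lemma8}, applied with $t=1$ and $t=1/2$) gives both inequalities with failure probability $e^{-cN}$; your per-delay-group vector concentration plus a union bound over $T+1$ groups can be made to work, but it costs extra bookkeeping for small groups and an $N\gtrsim T\,mn$ requirement, and buys nothing since after projection the non-identical means are handled for free. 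For \eqref{omega-1} and the $\widehat{\nabla}_{j,1}$ part of \eqref{omega-4}, the paper does not use matrix Bernstein or Hanson--Wright: it imports the $\psi_1$-norm bound on $\langle E(\widehat{X}-X),\sU\rangle$ from \cite[Lemma 5]{mohammadi2022convergence}, converts $\Vert E\Vert_\sF$ into $\Vert\nabla f\Vert_\sF$ via $\sigma_{\min}(X(\cdot))$, multiplies by the second linear form through $\Vert x_1x_2\Vert_{\psi_{1/2}}\le C\Vert x_1\Vert_{\psi_1}\Vert x_2\Vert_{\psi_1}$, and then applies the Orlicz sum and tail bounds \eqref{lemma0-7} and \eqref{lemma0-6} at deviation $t=\beta^2\log^2 N$ (and, for \eqref{omega-4}, the factorization $\Vert\sum_i s_i\,{\rm vec}(\sU_{j,i})\Vert\le\Vert\bar U\Vert_\sF\Vert\bar s\Vert$). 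The polynomial tail $N^{-\beta}$ and the $\beta^4\log^6$ factors in $N$ are exactly the price of this heavy-tailed $\psi_{1/2}$ regime; they are \emph{not}, as you suggest, a deliberate trade-off needed to preserve linear speedup --- a stronger exponential tail at the same batch size would only help, since the failure probability is multiplied by the iteration count.

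The one genuinely shaky step in your sketch is \eqref{omega-1}. Hanson--Wright does not apply as stated, because the summand is not a quadratic form in a single sub-gaussian vector: it couples the order-two chaos in $\sU_{j,i}$ with the $\zeta_{j,i}$-dependent Lyapunov solution $\widehat{X}_{j-d_{j,i}}-X_{j-d_{j,i}}$, so you must at least condition on $\zeta_{j,i}$ and argue separately about that factor; moreover your per-summand variance ``of order $c(a)^2 mn\Vert\nabla f(\sK_{j-d_{j,i}})\Vert_\sF^2/N$'' cannot be right as written (an individual summand's variance carries no $1/N$), and the conversion of $\Vert E_{j-d_{j,i}}(\widehat{X}-X)\Vert_\sF$ into the stated $\max_v\Vert\nabla f(\sK_{j-v})\Vert_\sF$ scale is left implicit. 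A Bernstein argument exploiting Assumption~\ref{assum1}'s bounded support (a.s.\ bound of order $mn$ times the gradient scale, variance of order the gradient scale squared) can indeed close this step with the given $N$, but note that it leans on boundedness of $\zeta$, whereas the paper's $\psi_1$-based route is precisely what supports its remark that the analysis extends to Gaussian initial states.
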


Lemma \ref{lemma4} shows that $\widehat{G}_j$ is \edit{$\sqrt{\epsilon}$-close} to $\overline{G}_j$. The proof can be found in Appendix D.
\begin{lemma}\label{lemma4}
Supposing that $r < \min\{r(a), \theta_3(a)\sqrt{\epsilon}\}$, $\tau \ge \theta_2(a) \log(1/(\epsilon r))$ and $\{\sK_v\}_{v=0}^j\subseteq \cQ(a)$, it holds that
\bea\label{epsilon-close}
\Vert \overline{G}_j - \widehat{G}_j \Vert_\sF \le \sqrt{2\mu_1(a)\epsilon}\min\{1/8, C_g(a)\}.
\ena
where $C_g(a)$ is as defined in Lemma \ref{lemma3}.
\end{lemma}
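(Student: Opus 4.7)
The plan is to decompose the difference $\overline{G}_j - \widehat{G}_j$ term-by-term and attribute the error to two sources: the \emph{finite-horizon truncation} of the rollout cost (controlled by $\tau$) and the \emph{finite-difference smoothing} of the gradient (controlled by $r$). Since both $\overline{G}_j$ and $\widehat{G}_j$ are averages over the same $N$ samples $(\zeta_{j,i},\sU_{j,i})$ at the same stale iterates $\sK_{j-d_{j,i}}$, it suffices (by the triangle inequality) to bound, for a single summand,
\[
\Delta_i := F_{\zeta_{j,i}}^{\tau}(\sK_{j-d_{j,i}},\sU_{j,i}) - \langle \nabla f_{\zeta_{j,i}}(\sK_{j-d_{j,i}}),\sU_{j,i}\rangle \sU_{j,i},
\]
uniformly over $i$, then average. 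I would split this as
\[
\Delta_i = \underbrace{\tfrac{1}{2r}\!\left[(f_{\zeta}^\tau - f_\zeta)(\sK{+}r\sU) - (f_{\zeta}^\tau - f_\zeta)(\sK{-}r\sU)\right]\!\sU}_{\text{truncation part}} + \underbrace{\left(\tfrac{f_\zeta(\sK{+}r\sU)-f_\zeta(\sK{-}r\sU)}{2r} - \langle\nabla f_\zeta(\sK),\sU\rangle\right)\!\sU}_{\text{smoothing part}},
\]
writing $\sK=\sK_{j-d_{j,i}}$, $\sU=\sU_{j,i}$, $\zeta=\zeta_{j,i}$.

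For the smoothing part, I would apply a second-order Taylor expansion of $f_\zeta$ around $\sK$. The local smoothness of $f_\zeta$ on a neighborhood of $\cQ(a)$ (which follows from the same kind of perturbation analysis already used in \cite{mohammadi2022convergence,fazel2018global} to obtain Lemma~\ref{lemma-global}, applied pointwise in $\zeta$ using $\|\zeta\|\le\delta$) gives a Lipschitz-gradient constant on a tube of radius $r_0(a)$ around $\cQ(a)$; the requirement $r<r_0(a)$ ensures $\sK\pm r\sU$ is still stabilizing, so both $f_\zeta(\sK\pm r\sU)$ are finite, and the Taylor remainder of the symmetric difference is of order $r\|\sU\|_\sF^2 = r\cdot mn$. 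Thus the smoothing part is bounded in Frobenius norm by $C(a)r\sqrt{mn}\cdot\sqrt{mn}$, giving an $O(r)$ contribution. For the truncation part, I would use the exponential stability of $A-B(\sK\pm r\sU)$ on the tube (again from $r<r_0(a)$) to show that the tail cost $\int_\tau^\infty(x^\T Qx + u^\T Ru)\,dt$ decays like $C'(a)\|\zeta\|^2 e^{-\alpha(a)\tau}$; dividing by $2r$ and multiplying by $\|\sU\|_\sF=\sqrt{mn}$ yields a contribution of order $(\sqrt{mn}/r)\,e^{-\alpha(a)\tau}$, which the hypothesis $\tau\ge\theta_2(a)\log\!\tfrac{1}{\epsilon r}$ forces to be $O(\epsilon)$ (in fact sub-$O(\epsilon)$ with the right choice of $\theta_2$).

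Putting the two pieces together and absorbing the dimensional factors into the constants, $\|\Delta_i\|_\sF$ is bounded by a quantity of order $r$, uniformly over $i$. Averaging over $i$ preserves this bound, and the hypothesis $r<\theta_1(a)\sqrt{\epsilon}$ translates it into the claimed estimate $\|\overline{G}_j-\widehat{G}_j\|_\sF \le \sqrt{2\mu_1(a)\epsilon}\cdot\min\{1/8,C_g(a)\}$ after choosing the constant $\theta_1(a)$ to match $\mu_1(a)$ and $C_g(a)$. The condition $\{\sK_v\}_{v=0}^j\subseteq\cQ(a)$ enters only to guarantee that every stale iterate $\sK_{j-d_{j,i}}$ lies in $\cQ(a)$, so the same $(a)$-dependent constants apply uniformly in $i$.

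The main obstacle, in my view, is making the tube-of-stabilizability argument quantitative: one must show that every perturbed policy $\sK_{j-d_{j,i}}\pm r\sU_{j,i}$ used in the rollouts is stabilizing and that the per-trajectory decay rate and cost bounds for $f_\zeta(\cdot)$ are uniform over the tube and over $\zeta$ with $\|\zeta\|\le\delta$. Once that is in place — and the corresponding radius $r_0(a)$ and rate $\alpha(a)$ are extracted — both the truncation and the smoothing bounds become standard applications of exponential stability and second-order Taylor expansion, respectively; the remaining work is just bookkeeping to absorb the $mn$ factors and match the constants in the hypotheses on $r$ and $\tau$.
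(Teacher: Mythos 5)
Your proposal is correct and takes essentially the same route as the paper: bound the per-sample difference uniformly and average via the triangle inequality, with the error split into a truncation part (decaying in $\tau$ by exponential stability of the perturbed closed loop) and a smoothing part (controlled by $r$ on the stabilizing tube of radius $r_0(a)$), then tune $r<\min\{r_0(a),\theta_1(a)\sqrt{\epsilon}\}$ and $\tau\ge\theta_2(a)\log(1/(\epsilon r))$. The only difference is that the paper imports the per-sample bound $\delta^2 p_1(2a)e^{-\tau}/r+\delta^2 r^2 p_2(2a)$ directly from \cite[Lemmas 11 and 13]{mohammadi2022convergence} rather than re-deriving it from exponential stability and a Taylor expansion as you sketch, which is exactly the quantitative ``tube-of-stabilizability'' step you flag as the main obstacle.
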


\subsection{Bounding the ratio of consecutive gradients}
In this subsection, we provide an upper bound of $\|\nabla f(\sK_{j-1})\|_\sF^2/\|\nabla f(\sK_{j})\|_\sF^2$. 
\begin{lemma}
Let $\rho=1+1/T$. Suppose that the event $\bigcap_{0\le v\le j-1} \Omega_v$ has happened, $\epsilon< f(\sK_v)-f(\sK^*)\le a$ holds for all $0\le v\le j$ and the AZOPG parameters $\tau, r, N, M, \eta$ satisfy the conditions in \eqref{parameter-conditions}. 

Then, for all $0\le v\le j$, it holds that
\bea\label{lemma1:result}
\|\nabla f(\sK_{v-1})\|_\sF^2 \le \rho\|\nabla f(\sK_{v})\|_\sF^2.
\ena
\end{lemma}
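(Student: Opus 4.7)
The plan is to establish the result by induction on $v$. I take $v=0$ as a trivial base case (under the convention $\sK_{-1}=\sK_0$), then assume \eqref{lemma1:result} holds for all indices strictly less than $v$ and prove it at $v$. Because the master update gives $\sK_v-\sK_{v-1}=-\eta\,\overline{G}_{v-1}$, the smoothness inequality \eqref{equ-smooth} and the triangle inequality yield
\bea
\|\nabla f(\sK_{v-1})\|_\sF \le \|\nabla f(\sK_v)\|_\sF + \mu_2(a)\,\eta\,\|\overline{G}_{v-1}\|_\sF, \notag
\ena
so the whole task reduces to bounding $\|\overline{G}_{v-1}\|_\sF$ by a controllable multiple of $\|\nabla f(\sK_{v-1})\|_\sF$ and then rearranging.

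To do that, I would combine Lemmas~\ref{lemma3} and~\ref{lemma4}. On the event $\Omega_{v-1}$, which by hypothesis has occurred, inequality \eqref{omega-4} gives $\|\widehat{G}_{v-1}\|_\sF\le C_g(a)\max_{0\le u\le d_{v-1}}\|\nabla f(\sK_{v-1-u})\|_\sF$, while Lemma~\ref{lemma4} contributes at most $\sqrt{2\mu_1(a)\epsilon}\,C_g(a)$ through the $\sqrt{\epsilon}$-closeness of $\overline{G}_{v-1}$ to $\widehat{G}_{v-1}$. Lemma~\ref{lemma-time} gives $d_{v-1}\le T$, so iterating the inductive hypothesis across at most $T$ consecutive indices yields
\bea
\max_{0\le u\le T}\|\nabla f(\sK_{v-1-u})\|_\sF \le \rho^{T/2}\|\nabla f(\sK_{v-1})\|_\sF \le \sqrt{e}\,\|\nabla f(\sK_{v-1})\|_\sF, \notag
\ena
using $(1+1/T)^T\le e$; this is precisely why the statement chooses $\rho=1+1/T$. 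Moreover, the standing hypothesis $f(\sK_{v-1})-f(\sK^*)>\epsilon$ together with the gradient-dominance inequality \eqref{equ-pl} gives $\sqrt{2\mu_1(a)\epsilon}\le\|\nabla f(\sK_{v-1})\|_\sF$, which lets me absorb the Lemma~\ref{lemma4} error into the main one and conclude $\|\overline{G}_{v-1}\|_\sF\le 2\sqrt{e}\,C_g(a)\,\|\nabla f(\sK_{v-1})\|_\sF$.

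Plugging this into the smoothness bound produces $(1-2\sqrt{e}\,\mu_2(a)\eta C_g(a))\,\|\nabla f(\sK_{v-1})\|_\sF\le\|\nabla f(\sK_v)\|_\sF$, and the stepsize condition $\eta\le 1/(128\mu_2(a)(C_g^2(a)+T C_g(a)))$ from \eqref{parameter-conditions} forces $2\sqrt{e}\,\mu_2(a)\eta C_g(a)\le \sqrt{e}/(64T)\le 1-1/\sqrt{\rho}$, so squaring closes the induction. The main obstacle I anticipate is exactly this compounding step: the inductive hypothesis is invoked up to $T$ times and each invocation injects a factor $\sqrt{\rho}$, so the whole argument hinges on the joint calibration of $\rho$ (through the exponent $T$), of the stepsize denominator (which must carry the factor $T$ to kill the cumulative amplification), and of the PL-based absorption of the Lemma~\ref{lemma4} error into the leading gradient term.
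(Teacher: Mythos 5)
Your proposal is correct and follows essentially the same route as the paper: strong induction on $v$, the bound $\|\overline{G}_{v-1}\|_\sF\le C_g(a)\max_u\|\nabla f(\sK_{v-1-u})\|_\sF+C_g(a)\sqrt{2\mu_1(a)\epsilon}$ from \eqref{omega-4} and Lemma~\ref{lemma4}, absorption of the $\sqrt{\epsilon}$ term via \eqref{equ-pl}, the compounding factor $\rho^{T/2}\le\sqrt{e}$ from the inductive hypothesis and $d_{v-1}\le T$, and the stepsize calibration against $1/T$. The only (cosmetic) difference is that you work with norms directly via the triangle inequality and then square, whereas the paper works with the squared-norm difference through $\|a\|^2-\|b\|^2\le 2\|a\|\,\|b-a\|$; both close the induction with the same constants.
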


\begin{proof}
From the fact that $f(\sK_{v})-f(\sK^*)> \epsilon$ and the gradient dominance condition \eqref{equ-pl}, for all $0 \le v\le j$, we obtain that 
\bea\label{lemma2-1}
\sqrt{2\mu_1(a)\epsilon}< \Vert\nabla f(\sK_{v})\Vert_\sF.
\ena
Then, we begin to prove \eqref{lemma1:result} by induction. First, we obtain that 
\bea\label{lemma1:step1}
&\|\nabla f(\sK_{v-1})\|^2_\sF - \|\nabla f(\sK_{v})\|^2_\sF \\
&\overset{(a)}{\le} 2\|\nabla f(\sK_{v-1})\|_\sF \|\nabla f(\sK_{v})-\nabla f(\sK_{v})\|_\sF \\
&\overset{(b)}{\le} 2\mu_2(a)\|\nabla f(\sK_{v-1})\|_\sF \|\sK_{v}-\sK_{v-1}\|_\sF \\
&= 2\mu_2(a)\eta\|\nabla f(\sK_{v-1})\|_\sF \|\overline{G}_{v-1}\|_\sF \\
&\overset{(c)}{\le} 2\mu_2(a)\eta\|\nabla f(\sK_{v-1})\|_\sF \left(C_g \sqrt{2\mu_1(a)\epsilon}+ \|\widehat{G}_{v-1}\|_\sF  \right) \\
&\overset{(d)}{\le} 2\mu_2(a)\eta\|\nabla f(\sK_{v-1})\|_\sF \left(C_g(a)\Vert\nabla f(\sK_{v-1})\Vert_\sF \right.
\\&\quad+ \left. C_g(a) \max_{0\le u\le d_{v-1}} \Vert \nabla f(\sK_{v-u-1})\Vert_\sF  \right),
\ena
where step (a) is from the inequality $\Vert a\Vert^2-\Vert b\Vert^2\le 2\Vert a\Vert\Vert b-a\Vert$, in step (b) we adopt \eqref{equ-smooth}, step (c) follows from \eqref{epsilon-close}, and step (d) from \eqref{omega-4} and \eqref{lemma2-1}.

For the basic case, we have $d_0=0$. Let $v=1$ in \eqref{lemma1:step1} and obtain that
\bea
\|\nabla f(\sK_{0})\|^2_\sF - \|\nabla f(\sK_{1})\|^2_\sF&\le 4\mu_2(a)C_g(a) \eta\|\nabla f(\sK_{0})\|^2_\sF\\
&\le \frac{\rho-1}{\rho} \|\nabla f(\sK_{0})\|^2_\sF, \notag
\ena
where the last inequality holds since 
$\frac{\rho-1}{\rho}\ge \frac{\rho^{1/2}-1}{\rho}\ge\frac{1}{4T}>\frac{1}{64T}$. Thus, $\|\nabla f(\sK_{0})\|^2_\sF\le \rho \|\nabla f(\sK_{1})\|^2_\sF$. 

For the induction step, assume that $\|\nabla f(\sK_{u-1})\|^2_\sF\le \rho \|\nabla f(\sK_{u})\|^2_\sF$ holds up to stage $v-1$, which yields that
\bea\label{lemma1:step2}
\max_{0\le u\le d_{v-1}} \Vert \nabla f(\sK_{v-u-1})\Vert_\sF \le \rho^{T/2} \|\nabla f(\sK_{v-1})\|_\sF.
\ena
Then, we substitute \eqref{lemma1:step2} into \eqref{lemma1:step1}. Since $\frac{\rho-1}{\rho^{T/2+1}}\ge\frac{\rho^{1/2}-1}{\rho^{T/2+1}}=\frac{1}{\rho T(1+1/T)^T}\ge \frac{1}{4\cdot T \cdot e}\ge\frac{1}{12T}>\frac{1}{64T}$, it holds that 
\bea \notag
&\|\nabla f(\sK_{v-1})\|^2_\sF - \|\nabla f(\sK_{v})\|^2_\sF \\
&\le 4\mu_2(a)C_g(a)\rho^{T/2} \eta\|\nabla f(\sK_{v-1})\|^2_\sF \le \frac{\rho-1}{\rho}\|\nabla f(\sK_{v})\|^2_\sF, \\
\ena
which completes the proof.
\end{proof}

\subsection{Descent Update Direction}
In this section, we prove that $-\overline{G}_j$ is a descent direction of $f(\sK_j)$, i.e., $\langle-\overline{G}_j, \nabla f(\sK_j)\rangle <0$.
\begin{lemma}\label{lemma-descent}
Suppose that the event $\bigcap_{0\le v\le j-1} \Omega_v$ has happened, $\epsilon< f(\sK_v)-f(\sK^*)\le a$ holds for all $v\le j$ and the AZOPG parameters $\tau, r, N, M, \eta$ satisfy the conditions in \eqref{parameter-conditions}. Then, it holds that
\begin{align}
-\langle \nabla f(\sK_j), \overline{G}_j\rangle &\le-\frac{1}{16}\|\nabla f(\sK_j)\|_\sF^2< 0, \label{correlation2}\\
\Vert\overline{G}_j\Vert_\sF&\le 4C_g(a) \Vert\nabla f(\sK_j)\Vert_\sF. \label{correlation1}
\end{align}
\end{lemma}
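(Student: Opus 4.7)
The plan is to decompose $\overline{G}_j$ in the natural way suggested by \eqref{define-gradient}:
\[
\overline{G}_j \;=\; \widehat{\nabla}_{j,1} + \widehat{\nabla}_{j,2} + \widehat{\nabla}_{j,3} + (\overline{G}_j-\widehat{G}_j),
\]
project each piece onto $\nabla f(\sK_j)$, and show that the positive ``signal'' term coming from $\widehat{\nabla}_{j,3}$ (via \eqref{omega-3}) dominates the three ``error'' terms. Throughout, I will repeatedly use the preceding ratio lemma together with $d_j\le T$ (Lemma \ref{lemma-time}) in the form
\[
\max_{0\le v\le d_j}\|\nabla f(\sK_{j-v})\|_\sF \;\le\; \rho^{T/2}\|\nabla f(\sK_j)\|_\sF \;\le\; \sqrt{e}\,\|\nabla f(\sK_j)\|_\sF,
\]
since $\rho^T=(1+1/T)^T\le e$, and the gradient dominance \eqref{equ-pl} combined with $f(\sK_j)-f(\sK^*)>\epsilon$, which gives $\sqrt{2\mu_1(a)\epsilon}\le\|\nabla f(\sK_j)\|_\sF$.

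For $\widehat{\nabla}_{j,1}$, inequality \eqref{omega-1} together with the above ratio bound yields $|\langle\widehat{\nabla}_{j,1},\nabla f(\sK_j)\rangle|\le (\sqrt{e}/8)\|\nabla f(\sK_j)\|_\sF^2$. For the bias $\overline{G}_j-\widehat{G}_j$, Lemma \ref{lemma4} gives $\|\overline{G}_j-\widehat{G}_j\|_\sF\le \tfrac{1}{8}\sqrt{2\mu_1(a)\epsilon}$, so by Cauchy--Schwarz and gradient dominance this contributes at most $\tfrac{1}{8}\|\nabla f(\sK_j)\|_\sF^2$. For the signal, \eqref{omega-3} directly yields the lower bound $\langle\widehat{\nabla}_{j,3},\nabla f(\sK_j)\rangle\ge \tfrac{1}{2}\|\nabla f(\sK_j)\|_\sF^2$. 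These three pieces are essentially mechanical.

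The main obstacle is bounding $\widehat{\nabla}_{j,2}$, which measures the effect of staleness. Starting from \eqref{omega-2} and smoothness \eqref{equ-smooth}, the task reduces to controlling $\|\sK_{j-v}-\sK_j\|_\sF\le \eta\sum_{u=1}^{v}\|\overline{G}_{j-u}\|_\sF$ for $v\le d_j\le T$. Each past $\|\overline{G}_{j-u}\|_\sF$ must in turn be bounded by \eqref{correlation1} at step $j-u$; this forces the argument to be carried out inductively in $j$ (in practice, as part of the outer induction in Theorem \ref{theo:main}). Under that induction hypothesis together with the ratio lemma, $\|\overline{G}_{j-u}\|_\sF\le 4\sqrt{e}\,C_g(a)\|\nabla f(\sK_j)\|_\sF$, so that
\[
|\langle\widehat{\nabla}_{j,2},\nabla f(\sK_j)\rangle| \;\lesssim\; T\mu_2(a)C_g(a)\,\eta\,\|\nabla f(\sK_j)\|_\sF^2.
\]
The stepsize condition $\eta\le 1/(128\mu_2(a)(C_g^2(a)+TC_g(a)))$ is calibrated precisely so that $TC_g/(C_g^2+TC_g)\le 1$ absorbs the leading $T$, giving a bound of at most $(\sqrt{e}/16)\|\nabla f(\sK_j)\|_\sF^2$. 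Summing the three errors against the signal yields
\[
-\langle\nabla f(\sK_j),\overline{G}_j\rangle \;\le\; -\Bigl(\tfrac12 - \tfrac{\sqrt{e}}{8} - \tfrac{\sqrt{e}}{16} - \tfrac18\Bigr)\|\nabla f(\sK_j)\|_\sF^2 \;\le\; -\tfrac{1}{16}\|\nabla f(\sK_j)\|_\sF^2,
\]
which is \eqref{correlation2}.

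Finally, for the norm bound \eqref{correlation1} I would split $\|\overline{G}_j\|_\sF\le \|\widehat{G}_j\|_\sF+\|\overline{G}_j-\widehat{G}_j\|_\sF$. Inequality \eqref{omega-4} with the ratio lemma bounds the first term by $\sqrt{e}\,C_g(a)\|\nabla f(\sK_j)\|_\sF$; Lemma \ref{lemma4} combined with gradient dominance bounds the second by $C_g(a)\|\nabla f(\sK_j)\|_\sF$. Adding gives $(1+\sqrt{e})C_g(a)\|\nabla f(\sK_j)\|_\sF\le 4C_g(a)\|\nabla f(\sK_j)\|_\sF$, as required.
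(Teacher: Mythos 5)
Your proposal is correct and follows essentially the same route as the paper: the same four-part split of $\langle\nabla f(\sK_j),\overline{G}_j\rangle$ into the contributions of $\widehat{\nabla}_{j,1}$, $\widehat{\nabla}_{j,2}$, $\widehat{\nabla}_{j,3}$ and the bias $\overline{G}_j-\widehat{G}_j$, with \eqref{omega-1}--\eqref{omega-4}, Lemma \ref{lemma4}, the ratio bound \eqref{lemma1:result} (via $d_j\le T$ and $\rho^{T/2}\le\sqrt{e}$), the lower bound \eqref{lemma2-1}, and the stepsize condition in \eqref{parameter-conditions} playing the same roles; your constants ($\sqrt{e}/8+\sqrt{e}/16+1/8<1/2-1/16$) check out, and your argument for \eqref{correlation1} matches the paper's one-line remark. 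The one genuine difference is how you control the stale norms $\Vert\overline{G}_{j-u}\Vert_\sF$ inside the bound on $\widehat{\nabla}_{j,2}$: you invoke \eqref{correlation1} at step $j-u$ and declare an induction over $j$ to be \emph{forced}, delegated to the outer induction of Theorem \ref{theo:main}. That induction is avoidable, and as phrased it slightly misplaces the logical dependence: the paper simply writes $\Vert\overline{G}_{v}\Vert_\sF\le\Vert\widehat{G}_{v}\Vert_\sF+\Vert\overline{G}_{v}-\widehat{G}_{v}\Vert_\sF$, bounds the first term by \eqref{omega-4} --- available for every $v\le j-1$ precisely because $\bigcap_{0\le v\le j-1}\Omega_v$ is assumed --- and the second by \eqref{epsilon-close} together with \eqref{lemma2-1}, then applies \eqref{lemma1:result}; no appeal to the lemma's own conclusion at earlier steps is needed. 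If you do prefer your recursive route, note that it can be kept internal to the lemma: the hypotheses at index $j$ imply those at every smaller index, and $d_0=0$ makes $\widehat{\nabla}_{0,2}$ vanish, so a strong induction is well-founded, at the price of the extra factor $4$ that your arithmetic just barely absorbs. (Both your write-up and the paper also use the $\Omega$-inequalities at index $j$ itself, which strictly belongs to $\Omega_j$ rather than $\bigcap_{0\le v\le j-1}\Omega_v$; that indexing looseness is inherited from the paper and not a defect of your argument.)
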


\begin{proof}
For LHS of \eqref{correlation2}, it is clear that
\bea\notag
&\langle \nabla f(\sK_j), \overline{G}_j\rangle =  \langle \nabla f(\sK_j), \widehat{\nabla}_{j,1}\rangle  + \langle \nabla f(\sK_j), \widehat{\nabla}_{j,2}\rangle 
\\&\quad\quad\quad\quad+  \langle \nabla f(\sK_j), \widehat{\nabla}_{j,3}\rangle + \langle \nabla f(\sK_j), \overline{G}_j-\widehat{G}_j\rangle.
\ena
We begin with bounding the first term $-\langle \nabla f(\sK_j), \widehat{\nabla}_{j,1}\rangle$:
\bea\notag
\left|\langle \nabla f(\sK_j), \widehat{\nabla}_{j,1}\rangle\right|
&\overset{(a)}{\le} \frac{1}{8}  \Vert \nabla f(\sK_j) \Vert_\sF \max_{0\le v\le d_j} \Vert \nabla f(\sK_{j-v})\Vert_\sF \\
&\overset{(b)}{\le} \frac{\rho^{T/2}}{8}  \Vert \nabla f(\sK_j) \Vert_\sF^2
\overset{(c)}{\le} \frac{1}{4}  \Vert \nabla f(\sK_j) \Vert_\sF^2,\\
\ena
where step (a) follows from \eqref{omega-1}, step (b) follows from \eqref{lemma1:result} and step (c) from the fact that $\left(1+\frac{1}{T}\right)^{T/2}\le \sqrt{e}<2$.

To bound the second term $\langle \nabla f(\sK_j), \widehat{\nabla}_{j,2}\rangle$, we consider $\max_{0\le v\le d_j} \Vert \nabla f(\sK_{j-v})- \nabla f(\sK_{j})\Vert_\sF$:
\bea\label{lemma3-1}
&\max_{0\le v\le d_j} \Vert \nabla f(\sK_{j-v})- \nabla f(\sK_{j})\Vert_\sF\\
&\le \mu_2(a) \max_{0\le v\le d_j} \Vert \sK_{j-v}- \sK_{j}\Vert_\sF
\le \mu_2(a)\eta \sum_{v=j-T}^{j-1}\left\| \overline{G}_v \right\|_\sF\\
&\overset{(a)}{\le} \mu_2(a)C_g(a)\eta \sum_{v=j-T}^{j-1}\max_{0\le w\le d_v}\left\| \nabla f(\sK_{v-w}) \right\|_\sF
\\ &\quad+ \mu_2(a)C_g(a)T\eta \sqrt{2\mu_1(a)\epsilon}\\
&\overset{(b)}{\le} \mu_2(a)C_g(a)T\rho^{T}\eta \left\| \nabla f(\sK_j) \right\|_\sF
+ \mu_2(a)C_g(a)T\eta\left\| \nabla f(\sK_j) \right\|_\sF\\
&\overset{(c)}{\le} 4\mu_2(a)C_g(a)T\eta \left\| \nabla f(\sK_j) \right\|_\sF, \\
\ena
where step (a) follows from \eqref{epsilon-close}, step (b) follows from \eqref{lemma1:result}  and \eqref{lemma2-1}, and step (c) from the fact that $\left(1+\frac{1}{T}\right)^{T}\le e<3$. Based on \eqref{lemma3-1} and \eqref{omega-3}, we obtain that
\bea\notag
|\langle \nabla f(\sK_j), \widehat{\nabla}_{j,2}\rangle|\le 8T\mu_2(a)C_g(a)\eta \left\| \nabla f(\sK_j) \right\|_\sF^2
\ena

The bound of the third term is in \eqref{omega-4}. Based on \eqref{epsilon-close}, \eqref{equ-pl}, the fourth term is bounded by
\bea \notag
|\langle \overline{G}_j-\widehat{G}_j, \nabla f(\sK_j)\rangle|  \le 
			\frac{1}{8} \Vert \nabla f(\sK_j) \Vert_\sF^2.
\ena
Summarizing the above inequalities yields that
\bea\notag
&-\langle \nabla f(\sK_j), \widehat{G}_j\rangle 
\le -\frac12 \Vert \nabla f(\sK_j) \Vert_\sF^2 + \frac{1}{4}\Vert \nabla f(\sK_j) \Vert_\sF^2 \\
& \quad+ 8\mu_2(a)C_g(a)T\eta\|\nabla f(\sK_j)\|_\sF^2 + \frac{1}{8}\Vert \nabla f(\sK_j) \Vert_\sF^2\\
& \le -\frac{1}{8}\|\nabla f(\sK_j)\|_\sF^2 + 8\mu_2(a)C_g(a)T\eta\|\nabla f(\sK_j)\|_\sF^2\\
&\le -\frac{1}{16}\|\nabla f(\sK_j)\|_\sF^2.
\ena
which proves \eqref{correlation2}. By similar approaches, eq. \eqref{correlation1} follows directly from \eqref{omega-4}-\eqref{lemma1:result}, which completes the proof.
\end{proof}

\subsection{Proving the result of Theorem \ref{theo:main}}
In this section, we prove Theorem \ref{theo:main} by induction. We first provide a lemma, which indicates that supposing the sequence $\{\sK_v\}_{v=0}^j$ is in $\cQ(a)$, if event ${\Omega}_j$ happens, then $\sK_{j+1}$ will remain in $\cQ(a)$.

\begin{lemma}\label{lemma-induction}
	Suppose that the event $\bigcap_{0\le v\le j-1} \Omega_v$ has happened, $\epsilon< f(\sK_v)-f(\sK^*)\le a$ holds for all $v\le j$, and the AZOPG parameters $\tau, r, N, M$ satisfy the conditions in \eqref{parameter-conditions}. If event ${\Omega}_{j}$ happens, it holds that $\sK_{j+1}\in \cQ(a)$.

\end{lemma}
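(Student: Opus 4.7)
The strategy is to show that a single AZOPG step cannot escape the sublevel set: I will argue that $f(\sK_{j+1}) \le f(\sK_j)$, which combined with $\sK_j \in \cQ(a)$ immediately yields $\sK_{j+1}\in \cQ(a)$. The ingredients are the $\mu_2(a)$-smoothness of $f$ on $\cQ(a)$ from \eqref{equ-smooth}, together with the descent and norm estimates \eqref{correlation2} and \eqref{correlation1} supplied by Lemma \ref{lemma-descent}.

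First I would invoke the standard descent lemma that follows from $\mu_2(a)$-smoothness along the segment from $\sK_j$ to $\sK_{j+1} = \sK_j - \eta\overline{G}_j$:
\begin{equation*}
f(\sK_{j+1}) - f(\sK_j) \;\le\; -\eta\,\langle \nabla f(\sK_j),\,\overline{G}_j\rangle + \tfrac{\mu_2(a)\eta^2}{2}\|\overline{G}_j\|_\sF^2.
\end{equation*}
Plugging in \eqref{correlation2} and \eqref{correlation1} produces
\begin{equation*}
f(\sK_{j+1}) - f(\sK_j) \;\le\; \Big(-\tfrac{\eta}{16} + 8\mu_2(a)\,\eta^2\, C_g^2(a)\Big)\|\nabla f(\sK_j)\|_\sF^2.
\end{equation*}
The step-size restriction $\eta \le 1/(128\mu_2(a)(C_g^2(a)+T C_g(a)))$ in \eqref{parameter-conditions} gives $8\mu_2(a)\eta C_g^2(a)\le 1/16$, so the bracket is nonpositive and $f(\sK_{j+1})\le f(\sK_j)\le a$, which is exactly the claim.

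The one subtlety, and the main technical obstacle, is that Lemma \ref{lemma-global} only furnishes smoothness \emph{inside} $\cQ(a)$, while the descent inequality above tacitly assumes the entire segment $\{\sK_j - s\eta\overline{G}_j : s\in[0,1]\}$ stays in $\cQ(a)$. I would close this gap with a standard continuity/bootstrap argument: set $s^* = \sup\{s\in[0,1] : \sK_j - s'\eta\overline{G}_j\in\cQ(a)\ \forall s'\le s\}$. Since $\sK_j$ lies in the open stabilizing set $\cK_{\rm st}$ and $f$ is continuous there with $f(\sK_j)\le a$, we have $s^* > 0$. Applying the smoothness-based descent computation on the interval $[0,s^*]$ (where the segment does lie in $\cQ(a)$) yields $f(\sK_j - s^*\eta\overline{G}_j) \le f(\sK_j)\le a$; combined with the continuity of $s\mapsto f(\sK_j - s\eta\overline{G}_j)$ and the openness of $\cK_{\rm st}$, this forces $s^* = 1$, because otherwise the strict sublevel-set property would extend slightly past $s^*$, contradicting its definition. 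With $s^*=1$ in hand, the descent inequality yields $f(\sK_{j+1})\le f(\sK_j)\le a$ and thus $\sK_{j+1}\in\cQ(a)$, completing the induction.
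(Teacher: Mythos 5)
Your proposal is correct and follows essentially the same route as the paper: a smoothness-based descent inequality combined with the bounds \eqref{correlation2}--\eqref{correlation1} from Lemma \ref{lemma-descent} and the stepsize condition, plus a continuity argument to keep the segment $\sK_j - s\eta\overline{G}_j$ inside $\cQ(a)$ (the paper phrases this as a contradiction at the exit step $\bar{\eta}_j$ where $f=a$, using compactness of $\cQ(a)$, rather than your sup/bootstrap $s^*$). The only point to tighten is that extending past $s^*$ (and getting $s^*>0$ in the boundary case $f(\sK_j)=a$) needs the \emph{strict} decrease $f(\sK_j-s^*\eta\overline{G}_j)<f(\sK_j)$, which indeed holds because $\|\nabla f(\sK_j)\|_\sF>\sqrt{2\mu_1(a)\epsilon}>0$ by the hypothesis and gradient dominance --- exactly the strict inequality the paper invokes in \eqref{lemma4-1}.
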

\begin{proof}
Lemma \ref{lemma-descent} indicates that $\overline{G}_j$ is a descent direction of $f(\sK_j)$. By the compactness of $\cQ(a)$ \cite{toivonen1985globally}, there exists $\bar{\eta}_j>0$, for all $\gamma\le\bar{\eta}_j$, $\sK_{\gamma}=\sK_j-\gamma\overline{G}\in\cQ(a)$, and $f(\sK_{\bar{\eta}_j})$ satisfies that $f(\sK_{\bar{\eta}_j})=a$.

We now prove that $\eta<\bar{\eta}_j$. For the sake of contradiction, we suppose that $\eta \ge \bar{\eta}_j$. Since $\sK_{\bar{\eta}_j}\in\cQ(a)$, following from the smoothness of $f(\sK)$ over $\cQ(a)$ \eqref{equ-smooth}, it holds that
\bea\label{lemma4-1}
&f(\sK_{\bar{\eta}_j})\le f(\sK_{j})- \bar{\eta}_j \langle \nabla f(\sK_j), \overline{G}_j\rangle + \frac{\mu_2(a)\bar{\eta}_j^2}{2}\Vert\overline{G}_j\Vert_\sF^2 \\
&\le f(\sK_{j})- \frac{\bar{\eta}_j}{8} \Vert\nabla f(\sK_j)\Vert_\sF^2  
+ 8\bar{\eta}_j\eta\mu_2(a)C_g(a)T\Vert\nabla f(\sK_j)\Vert_\sF^2\\
&\quad+ 8\bar{\eta}_j^2\mu_2(a)C_g^2(a)\Vert\nabla f(\sK_j)\Vert_\sF^2 \\
&\le f(\sK_{j}) - \frac{\bar{\eta}_j}{16} \Vert\nabla f(\sK_j)\Vert_\sF^2 
<f(\sK_{j})\le a.
\ena
Eq. \eqref{lemma4-1} indicates that $f(\sK_{\bar{\eta}_j})< a$, which contradicts $f(\sK_{\bar{\eta}_j})=a$. Thus, we obtain that $\eta<\bar{\eta}_j$ and $\sK_{j+1}=\sK_{\eta}\in\cQ(a)$, which completes the proof.
\end{proof}
We start to proof Theorem \ref{theo:main} by induction with the basic case $\sK_0\in \cQ(a)$. For the inductive step, assume that the event $\bigcap_{0\le v\le j-1} \Omega_v$ has happened. As a consequence, $\sK_v\in \cQ(a)$ holds for all $v\le j$. If then $\Omega_j$ happens, Lemma \ref{lemma-induction} indicates that $\sK_{j+1}\in \cQ(a)$. It follows from the smoothness \eqref{equ-smooth} and the gradient dominance condition \eqref{equ-pl} that
\bea\notag
&f(\sK_{j+1})-f(\sK^*)\\
&\le f(\sK_{j})-f(\sK^*)- \eta \langle \nabla f(\sK_j), \overline{G}_j\rangle + \frac{\mu_2(a)\eta^2}{2}\Vert\overline{G}_j\Vert_\sF^2\\
&\le f(\sK_{j})-f(\sK^*)- \frac{\eta}{16} \Vert \nabla f(\sK_j) \Vert_\sF^2\\
&\le (1-\eta\mu_1(a)/8)(f(\sK_{j})-f(\sK^*)),
\ena
which in conjunction with the probability of $\Omega_j$ in Lemma \ref{lemma3} completes the proof.

\section{Numerical Examples}\label{sec:5}
\begin{figure*}[!t]
	\centering
	\captionsetup{justification=raggedright}
	\subfloat[]{\label{fig1-sec1}\includegraphics[width=0.328\linewidth]{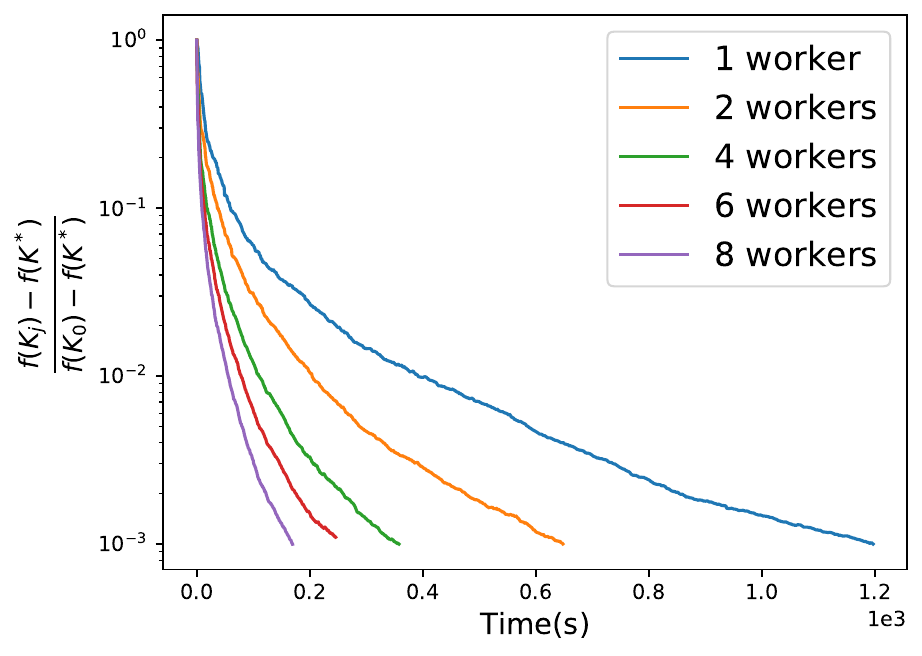}}
	\subfloat[]{\label{fig1-sec2}\includegraphics[width=0.328\linewidth]{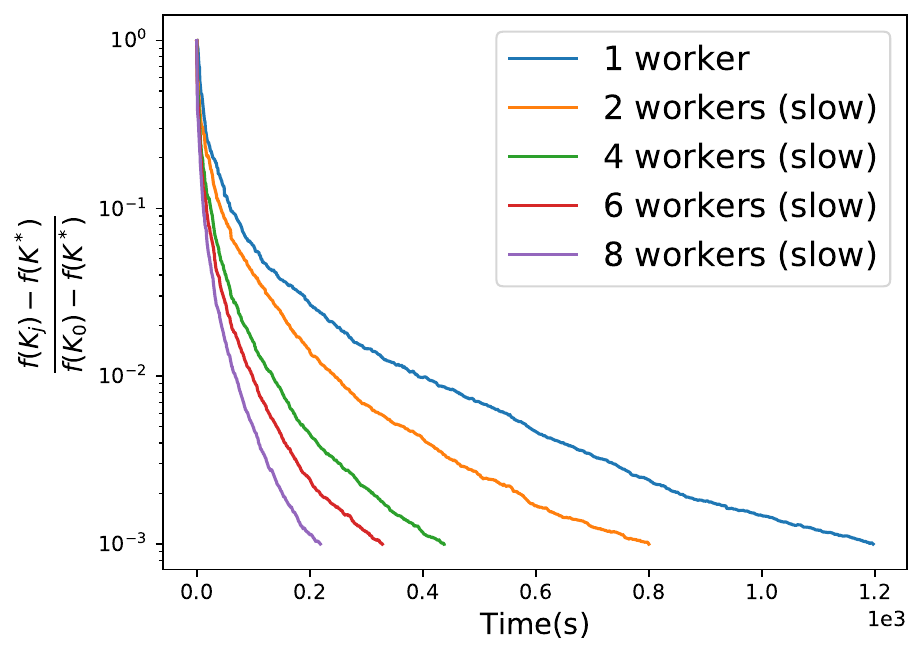}}
	\subfloat[]{\label{fig1-sec3}\includegraphics[width=0.30\linewidth]{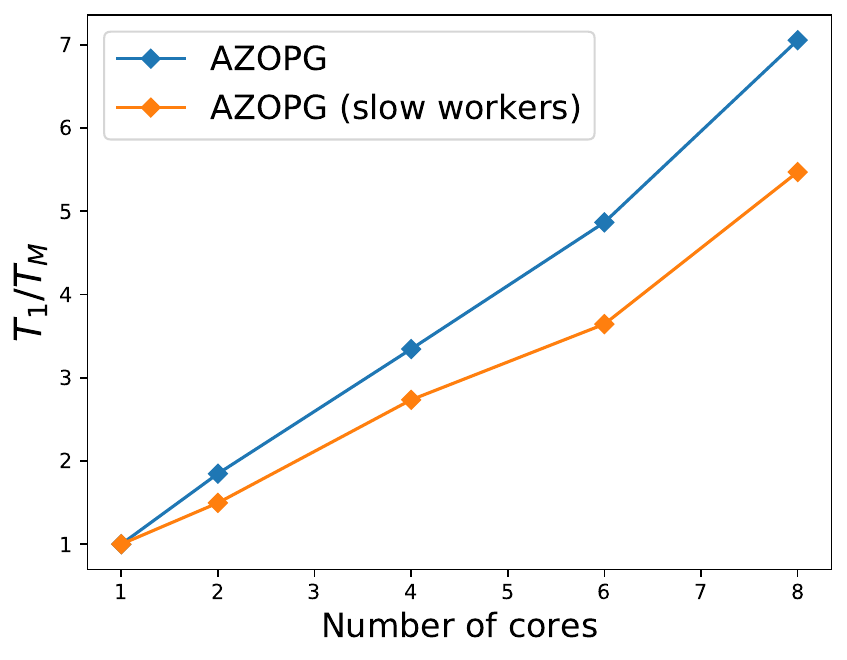}}
	\caption{(a) Convergence performance of AZOPG with different numbers of workers. (b) Convergence performance of AZOPG with half of the workers are slowed down. (c) Speedup in running time with respect to the number of workers for the normal case and the slowed case.
	}
\end{figure*}

We use AZOPG to learn a policy for a mass-spring-damper system of 4 masses, where the state vector $x=[p^\T, v^\T]^\T$ is stacked by the position and velocity vectors, and $A$ and $B$ in \eqref{equ:lqr-form} are given as
\bea\notag
A = \left[\begin{array}{rr}
	0 & I \\
	-T & -T\\
\end{array}\right],~\text{and}~ 
B = \left[\begin{array}{r}
	0 \\ I
\end{array}\right],
\ena
where $0$ and $I$ are zero and identity matrices of $4\times4$ respectively, and $T$ is a Toeplitz matrix with constants $[2, -1, 0, 0]$ on the first row. We set $Q$ and $R$ as identity matrices. The initial vectors $x(0)$ are sampled from normal distribution. The PG algorithm is initialized with $\sK_0=0$.

We implement AZOPG with $1,2,4,6,8$ workers, respectively, where the master and workers are implemented by CPU cores that use \texttt{OpenMPI} in python to communicate. Each worker is assigned to run rollouts for the continuous-time system based on \texttt{control} in python. We set stepsize $\eta=2\times10^{-3}$, smooth radius $r=10^{-5}$ and time-horizon $\tau=100$. Fig. \ref{fig1-sec1} demonstrates the linear convergence of AZOPG with different numbers of workers, implying that the convergence time is roughly inversely proportional to the number of workers. 

In asynchronous parallel implementations, it is common that workers may have different computational speeds. Then, we evaluate the robustness of AZOPG by forcing some workers in the network to slow down. In the 2-, 4-, 6- and 8-worker networks, we manually add a waiting time (100ms) after each local rollout to half of the workers. Fig. \ref{fig1-sec2} demonstrates the convergence of AZOPG with slow workers, which indicates that AZOPG is robust to workers with different computational speeds. It is indicated that AZOPG still keeps an almost linear speedup despite the different update times among the workers.

To illustrate the speedup property, we define the speedup of AZOPG as $T_1/T_M$, where $T_M$ is the running time of AZOPG with $M$ worker(s) to attain $\frac{f(\sK_j)-f(\sK^*)}{f(\sK_0)-f(\sK^*)}\le10^{-3}$. Fig. \ref{fig1-sec3} shows that AZOPG achieves a roughly linear speedup with respect to the number of workers under both aforementioned situations.

\section{Conclusion}\label{sec:6}
In this work, we have introduced an asynchronous parallel zero-order policy gradient (AZOPG) method for the continuous-time LQR problems, which allows multiple parallel workers over a star network to perform local simulations asynchronously. Linear convergence of AZOPG has been established despite the nonconvexity of the problem. We have further validated that the convergence speed of AZOPG increases linearly with respect to the worker number theoretically and practically. Our future works may focus on designing asynchronous actor-critic algorithm for the LQR problem.

\appendix
We first introduce the Orlicz norm and its corresponding properties. The $\psi_\alpha$-norm of a random variable $x$ is given by $\Vert x\Vert_{\psi_\alpha}=\inf_t \{t>0|\bE[\psi_\alpha(|x|/t)]\le 1\}$, where \edit{$\psi_\alpha(x)=e^{x^\alpha}-1$} (linear near $x=0$ when $\alpha\in(0,1)$). The tail bound of the $\psi_\alpha$-norm indicates that
\bea\label{lemma0-6}
\bP\{|x|>t\Vert x\Vert_{\psi_\alpha}\}\le c_\alpha e^{-t^\alpha}.
\ena
We then provide two inequalities that are useful. First, for any $\alpha>0$ and any random variables $x_1, x_2$, it holds that
\bea\label{lemma0-8}
\Vert x_1 x_2 \Vert_{\psi_\alpha}\le C_\alpha\Vert x_1 \Vert_{\psi_{2\alpha}}\Vert x_2 \Vert_{\psi_{2\alpha}}.
\ena
Second, for any $\alpha\in (0,1]$ and any sequence of zero-mean independent variables $x_1, \ldots, x_N$, it holds that
\bea\label{lemma0-7}
\Vert \sum_{i=1}^N x_i \Vert_{\psi_\alpha}\le C_\alpha' \sqrt{N}\log N \max_{1\le i\le N} \Vert x_i \Vert_{\psi_\alpha}.
\ena

\subsection{Proof of \eqref{omega-1} in Lemma \ref{lemma3}}
We first study the $\psi_1$-norm of $\langle E_i(X_i-\bar{X}_i), U_i \rangle$. Let $\langle \widehat{\nabla}_{j,1}, \nabla f(\sK_j)\rangle=1/N\sum_{i=1}^N Y_{j,i}$, $Y_{j,i}=\langle E_{j-d_{j,i}}(\widehat{X}_{j-d_{j,i}}-X_{j-d_{j,i}}), U_i \rangle \langle \nabla f(\sK_j), U_i \rangle$, and $Y_{j,i}'=\langle E_{j-d_{j,i}}(\widehat{X}_{j-d_{j,i}}-X_{j-d_{j,i}}), U_i \rangle$.
From \cite[Lemma 5]{mohammadi2022convergence}, it holds that $\Vert Y_{j,i}' \Vert_{\psi_{1}}
\le \theta'(a)\Vert E_{j-d_{j,i}}^\T \sU_{j,i} \Vert_\sF$
where $\theta'(a)$ depends on the system matrices and initial distribution.
By defining $\theta''(a)=\max_{\sK\in\cQ(a)}\sigma_{\min} (X(\sK))$ \cite[Lemma 16]{mohammadi2022convergence}, we obtain that
\bea\label{lemma1-0}
\Vert Y_{j,i}' \Vert_{\psi_{1}}
&\le \frac{\theta'(a)\theta''(a)}{\sigma_{\min} (X_{j-d_{j,i}})}  \Vert E_{j-d_{j,i}}\Vert_\sF \Vert \sU_{j,i}\Vert_\sF
\\ &\le \theta'(a)\theta''(a)\sqrt{mn} \Vert \nabla f(\sK_{j-d_{j,i}})\Vert_\sF.
\ena
Then, by inequality \eqref{lemma0-8}, we obtain the bound for $\Vert Y_{j,i}\Vert_{\psi_{\frac{1}{2}}}$:
\bea\notag
&\Vert Y_{j,i}\Vert_{\psi_{\frac{1}{2}}}\le C_1\Vert Y_{j,i}' \Vert_{\psi_{1}} \Vert \nabla f(\sK_j) \Vert_{\psi_{1}}\\
&\le C_1\theta'(a)\theta''(a) \sqrt{mn} \Vert\nabla f(\sK_{j-d_{j,i}}) \Vert_\sF \Vert \nabla f(\sK_j) \Vert_\sF
\ena
Applying \eqref{lemma0-7} and $N\ge C_1\beta^4\theta'(a)\theta''(a)\log^6(N)nm$, we obtain that
\bea\label{lemma1-1}
&\|\langle \widehat{\nabla}_{j,1}, \nabla f(\sK_j)\rangle\|_{\psi_{\frac{1}{2}}}\le \frac{C'\log N}{\sqrt{N}}\max_{1\le i \le N} \|Y_{j,i}\|_{\psi_{\frac{1}{2}}}\\
&\le \frac{1}{\beta^2\log^2 N} \Vert \nabla f(\sK_j) \Vert_\sF \max_{1\le i \le N} \Vert\nabla f(\sK_{j-d_{j,i}}) \Vert_\sF.
\ena
Then, combine \eqref{lemma1-1} and inequality \eqref{lemma0-6} with $t= \frac{\Vert \nabla f(\sK_j) \Vert_\sF\max_{i} \Vert\nabla f(\sK_{j-d_{j,i}}) \Vert_\sF}{8\|\langle \widehat{\nabla}_{j,1}, \nabla f(\sK_j)\rangle\|_{\psi_{1/2}}}$ to obtain that with probability at least $1-N^{-\beta}$, the inequality holds:
\bea
|\langle \widehat{\nabla}_{j,1}, \nabla f(\sK_j)\rangle|\le \frac{1}{8}\Vert \nabla f(\sK_j) \Vert_\sF\max_{1\le i \le N} \Vert\nabla f(\sK_{j-d_{j,i}}) \Vert_\sF.
\ena	

\subsection{Proof of \eqref{omega-2} and \eqref{omega-3} in Lemma \ref{lemma3}}\label{appen-b}
We start with a lemma.

\begin{lemma}\label{lemma8}
Let $U_1, \ldots, U_N\in \bR^{m\times n}$ be i.i.d. random matrices with each ${\rm vec}(U_i)$ uniformly distributed on the sphere $\sqrt{mn}\bS^{mn-1}$. Then, for any $W_1, \ldots, W_N, W_1', \ldots, W_N'\in \bR^{m\times n}, $ and $t\in (0,1]$, the following inequality holds with probability at least $1-2e^{-cNt^2}$:
\bea\label{lemma8-1}
&\left|\frac{1}{N} \sum_{i=1}^N \langle W_i, U_i \rangle  \langle W_i', U_i \rangle - \frac{1}{N} \sum_{i=1}^N\langle W_i, W_i' \rangle \right| 
\\ &\le t \max_{1\le i\le N} \Vert W_i\Vert_\sF  \Vert W_i'\Vert_\sF.
\ena
\end{lemma}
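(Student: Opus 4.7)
The claim is a Bernstein-type deviation bound for the empirical average $\frac{1}{N}\sum_i \langle W_i, U_i\rangle\langle W_i', U_i\rangle$ around its mean. My plan is to (i) verify the mean, (ii) bound the Orlicz norm of each centered summand, and (iii) apply a standard Bernstein inequality for sums of independent sub-exponential scalars.

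First I would vectorize. Setting $w_i={\rm vec}(W_i)$, $w_i'={\rm vec}(W_i')$, $u_i={\rm vec}(U_i)$, the matrix inner product reduces to $\langle W_i,U_i\rangle = w_i^\T u_i$, where $u_i$ is uniform on $\sqrt{mn}\,\bS^{mn-1}$. Since $\bE[u_i u_i^\T]=I_{mn}$, a direct calculation gives $\bE[w_i^\T u_i\cdot (w_i')^\T u_i]=w_i^\T w_i' = \langle W_i,W_i'\rangle$, so defining the centered summand $Z_i := \langle W_i,U_i\rangle\langle W_i',U_i\rangle - \langle W_i,W_i'\rangle$ produces independent zero-mean scalars whose empirical average is exactly the quantity being controlled.

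Next I would bound the $\psi_1$-norm of $Z_i$. By the standard sub-Gaussian concentration of linear functionals on the sphere (a consequence of L\'evy's inequality), $\|w^\T u\|_{\psi_2}\le c\|w\|_2$ whenever $u$ is uniform on $\sqrt{mn}\,\bS^{mn-1}$. Combining this with the product inequality \eqref{lemma0-8} gives $\|w_i^\T u_i\cdot (w_i')^\T u_i\|_{\psi_1}\le C\|W_i\|_\sF\|W_i'\|_\sF$, and subtracting the deterministic constant $\langle W_i,W_i'\rangle$ changes the $\psi_1$-norm only by an absolute factor. Writing $K:=\max_{1\le i\le N}\|W_i\|_\sF\|W_i'\|_\sF$, we obtain $\|Z_i\|_{\psi_1}\le CK$ uniformly in $i$.

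Finally I would invoke Bernstein's inequality for independent centered sub-exponential variables applied to $\tfrac{1}{N}\sum_i Z_i$, which yields
$$\bP\!\left\{\Big|\tfrac{1}{N}\sum_{i=1}^N Z_i\Big|>s\right\}\le 2\exp\!\left(-c'N\min\!\left(s^2/K^2,\, s/K\right)\right).$$
Setting $s=tK$ with $t\in(0,1]$ collapses the minimum to $t^2$, giving the stated probability $2e^{-cNt^2}$. The one genuinely non-trivial ingredient here is the sub-Gaussian bound for linear functionals on the sphere, which is slightly subtler than the Gaussian case because of the unit-norm constraint; the rest is a routine application of the Orlicz-norm machinery already used in Appendices A--C.
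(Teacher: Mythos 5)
Your proposal is correct and follows essentially the same route as the paper's proof: compute the mean via $\bE[{\rm vec}(U_i){\rm vec}^\T(U_i)]=I$, bound the $\psi_1$-norm of each centered summand by combining sub-Gaussianity of linear functionals on the sphere with the product inequality \eqref{lemma0-8}, and then apply Bernstein's inequality for independent sub-exponential variables, where $t\le 1$ makes the $t^2$ term dominate. No gaps to report.
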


\begin{proof}
We first prove that $\langle W_i', U_i \rangle \langle W_i, U_i \rangle - \langle W_i', W_i \rangle$ are zero-mean variables with bounded ${\psi_1}$-norm. The expectation of $\langle W_i', U_i \rangle \langle W_i, U_i \rangle$ satisfies that $\bE\langle W_i', U_i \rangle \langle W_i, U_i \rangle= {\rm vec}^\T(W_i')\left(\bE{\rm vec}(U_i){\rm vec}^\T(U_i)\right){\rm vec}(W_i)=\langle W_i', W_i \rangle$,
where $\bE{\rm vec}(U_i){\rm vec}^\T(U_i)=I$. Meanwhile, the inequality \eqref{lemma0-8} and \cite[Theorem 3.4.6]{vershynin2018high} yields that $\Vert\langle W_i', U_i \rangle \langle W_i, U_i \rangle\Vert_{\psi_1}\le \bar{c}_1\Vert\langle W_i', U_i \rangle\Vert_{\psi_2} \Vert \langle W_i, U_i \rangle\Vert_{\psi_2}\le \bar{c}_2\Vert W_i' \Vert_\sF \Vert W_i \Vert_\sF$. By triangle inequality, we obtain that $\Vert\langle W_i', U_i \rangle \langle W_i, U_i \rangle - \langle W_i', W_i \rangle \Vert_{\psi_1}\le \bar{c}_3\Vert W_i' \Vert_\sF \Vert W_i \Vert_\sF$. 

Since  $\langle W_i, U_i \rangle \langle W_i', U_i \rangle-\langle W_i', W_i \rangle$ are zero-mean independent variables, we apply the Bernstein inequality \cite[Corollary 2.8.3]{vershynin2018high} and obtain the result \eqref{lemma8-1}.
\end{proof}

Then,  we apply Lemma \ref{lemma8} with $t=1$ to obtain that the inequality $\left|\langle \widehat{\nabla}_{j,2}, \nabla f(\sK_j)\rangle \right|\le 2\max_i \Vert \nabla f(\sK_{j}) - \nabla f(\sK_{j-d_{j,i}})\Vert_\sF\Vert \nabla f(\sK_j)\Vert_\sF$
holds with probability at least $1-2e^{-\hat{c}N}$. Meanwhile, Lemma \ref{lemma8} with $t=\frac{1}{2}$ yields that the inequality
$
\langle \widehat{\nabla}_{j,3}, \nabla f(\sK_j)\rangle \ge \frac{1}{2}\Vert \nabla f(\sK_j)\Vert_\sF^2
$
holds with probability at least $1-2e^{-\hat{c}'N/4}$, which completes the proof.

\subsection{Proof of \eqref{omega-4} in Lemma \ref{lemma3}}
Recalling that $\widehat{G}_j=\widehat{\nabla}_{j,1} + \widehat{\nabla}_{j,2} + \widehat{\nabla}_{j,3}$, we 
first discuss $\widehat{\nabla}_{j,2} + \widehat{\nabla}_{j,3}$. Begin by noting that
\bea\label{lemma-last-5}
\sum_{i=1}^N \Vert\langle\nabla f(\sK_{j-d_{j,i}}), \sU_{j,i}\rangle  \sU_{j,i}\Vert_\sF  \le \Vert \bar{U} \Vert_\sF \Vert\bar{s}\Vert,
\ena
where $\bar{U}\in\bR^{mn\times N}$ contains $\mbox{vec}(\sU_{j,i})$ in the $i$th column, $\Vert \bar{U} \Vert_\sF=\sqrt{mnN}$, and $\bar{s}\in\bR^{N} $ is a vector with the $i$th entry $\langle\nabla f(\sK_{j-d_{j,i}}), \sU_{j,i}\rangle$. Then, we apply Lemma \ref{lemma8} with $t=1$ to obtain that
with probability at least $1-2e^{-\tilde{c}N}$, the following inequality holds:
\bea\label{lemma-last-1}
\Vert\widehat{\nabla}_{j,2} + \widehat{\nabla}_{j,3}\Vert_\sF \le 2\sqrt{mn}  \max_{1\le i\le N}\Vert \nabla f(\sK_{{j-d_{j,i}}}) \Vert_\sF.
\ena
Then, we bound $\Vert\widehat{\nabla}_{j,1}\Vert_\sF$. Similar to \eqref{lemma-last-5}, we have
\bea\label{lemma-last-0}
\sum_{i=1}^N \Vert\langle E_{j-d_{j,i}} (\widehat{X}_{j-d_{j,i}} - X_{j-d_{j,i}}), \sU_{j,i}\rangle  \sU_{j,i}\Vert \le \Vert \bar{U} \Vert_\sF \Vert\bar{y}\Vert,
\ena
where $\bar{U}$ has the same definition as in \eqref{lemma-last-5}, and $\bar{y}$ is the vector with the $i$th entry given by $\bar{y}_i=Y_{j,i}'=\langle E_{j-d_{j,i}} (\widehat{X}_{j-d_{j,i}} - X_{j-d_{j,i}}), \sU_{j,i}\rangle$. Then, we obtain that
\bea
&\Vert\Vert \bar{y} \Vert^2\Vert_{\psi_{1/2}}=\Vert\Vert \sum_{i=1}^N \bar{y}_i^2 \Vert\Vert_{\psi_{1/2}}\overset{(a)}{\le} C' N \max_{1\le i\le N}\Vert \bar{y}_i \Vert^2_{\psi_{1}}\\
&\overset{(b)}{\le} C_4\theta''(a) mnN  \max_{1\le i\le N}\Vert \nabla f(\sK_{{j-d_{j,i}}}) \Vert_\sF^2,\\
\ena
where step (a) follows from inequality \eqref{lemma0-7} and \cite[Proposition 2.7.1]{vershynin2018high}, and step (b) from \eqref{lemma1-0}. Using \eqref{lemma-last-0} and applying inequality \eqref{lemma0-6} with $x=\Vert \bar{y} \Vert^2$ and $t=\beta^2\log^2 N$, it holds that
\bea\label{lemma-last-2}
\Vert\widehat{\nabla}_{j,1}\Vert_\sF\le C_4\theta''(a) mn \beta\log N \max_{1\le i\le N}\Vert \nabla f(\sK_{{j-d_{j,i}}}) \Vert_\sF
\ena
with probability at least $1-\tilde{c}' N^{-\beta}$.

Finally, we summarize \eqref{lemma-last-1} and \eqref{lemma-last-2} and obtain that with probability at least $1-2e^{-\tilde{c}N}-\tilde{c}' N^{-\beta}$, the following inequality holds
\bea\notag
\Vert \widehat{G}_j \Vert_\sF \le C_g(a) \max_{0\le v\le d_j} \Vert \nabla f(\sK_{j-v})\Vert_\sF,
\ena
where $C_g(a)=C_4 \theta'(a) \beta mn\log N+2\sqrt{mn}$.
\subsection{Proof of Lemma \ref{lemma4}}
By \eqref{equ-smooth}, it can be proved that for any $\sK\in \cQ(a)$ and $\sU$ with $\Vert \sU \Vert_\sF\le\sqrt{mn}$, we have $\sK+r_0(a)\sU\in \cQ(a)$, where $r_0(a)$ is only dependent on $a$ and the system matrices.

Since $\sK_{j-d_{j,i}}\in\cQ(a)$, if $r\le r_0(a)$, following from \cite[Lemmas 11 and 13]{mohammadi2022convergence}, it holds for $1\le i \le N$ that
\bea\label{lemma0-1}
&\Vert \langle \nabla f_{\zeta_{j,i}}(\sK_{j-d_{j,i}}), \sU_{j,i}\rangle  \sU_{j,i} - F_{\zeta_{j,i}}^\tau(\sK_{j-d_{j,i}},\sU_{j,i})\sU_{j,i} \Vert_\sF\\ 
&\quad\le \frac{\delta^2p_1(2a)e^{-\tau}}{r}+\delta^2r^2p_2(2a),
\ena
where $p_1, p_2$ are positive functions depending on LQR parameters. Then, by defining positive functions 
$
\theta_1(a)=\left(\frac{\sqrt{2\mu_1(a)}\min\{1/8, C_g(a)\}}{2\delta^2p_2(2a)}\right)^{1/2}, 
\theta_2(a)=\log\left(2\frac{\delta^2p_1(2a)}{\sqrt{2\mu_1(a)}\min\{1/8, C_g(a)\}}\right),$
it holds that for any pair of $r<\min\{r_0(a), \theta_1(a)\sqrt{\epsilon}\}$ and $\tau \ge\theta_2(a)\log(1/(r\epsilon))$, the upper bound in \eqref{lemma0-1} becomes no greater than $\sqrt{2\mu_1(a)\epsilon}\min\{1/8, C_g(a)\}$. By triangle inequality, we have 
$\Vert \overline{G}_j - \widehat{G}_j \Vert_\sF\le \frac{1}{N}\sum_{i=1}^N \Vert \langle \nabla f_{\zeta_{j,i}}(\sK_{j-d_{j,i}}), \sU_{j,i}\rangle  \sU_{j,i} - F_{\zeta_{j,i}}^\tau(\sK_{j-d_{j,i}},\sU_{j,i})\sU_{j,i} \Vert_\sF$, which completes the proof.

\bibliographystyle{unsrt}
\bibliography{ref3}

\end{document}